\newtheorem{theorem}{Theorem}
\newtheorem{lemma}{Lemma}
\newtheorem{proposition}{Proposition}
\newtheorem{corollary}{Corollary}
\theoremstyle{definition}
\newtheorem{definition}{Definition}
\newtheorem{remark}{Remark}
\DeclareRobustCommand\bigop[2][1]{%
  \mathop{\vphantom{\sum}\mathpalette\bigop@{{#1}{#2}}}\slimits@
}
\newcommand{\bigop@}[2]{\bigop@@#1#2}
\newcommand{\bigop@@}[3]{%
  \vcenter{%
    \sbox\z@{$#1\sum$}%
    \hbox{\resizebox{\ifx#1\displaystyle#2\fi\dimexpr\ht\z@+\dp\z@}{!}{$\m@th#3$}}%
  }%
}
\newcommand{\bigast}{\DOTSB\bigop{\ast}}
\newcommand{\ee}{\mathrm{e}}
\newcommand{\ii}{\mathrm{i}}
\newcommand{\bA}{{\bf A}}
\newcommand{\bS}{{\bf S}}
\newcommand{\exend}{\hfill $\Diamond$}
\newcounter{nootje}
\newcommand\noot[1]
\begin{document}

\title[The spectral theory of regular sequences]{The spectral theory of regular sequences}

\subjclass[2010]{Primary 11B85; Secondary 42A38, 28A80}
\keywords{regular sequences, aperiodic order, symbolic dynamics, continuous measures, dilation equations}

\author{Michael Coons}
\author{James Evans}
\address{School of Information and Physical Sciences, 
   University of Newcastle, \newline
\hspace*{\parindent}University Drive, Callaghan NSW 2308, Australia}
\email{michael.coons@newcastle.edu.au, james.evans10@uon.edu.au}

\author{Neil Ma\~nibo} 
\address{Fakult\"at f\"ur Mathematik, Universit\"at Bielefeld, \newline
\hspace*{\parindent}Postfach 100131, 33501 Bielefeld, Germany}
\email{cmanibo@math.uni-bielefeld.de }

\date{\today}

%%%%%%%%%%%%%%%%%%%%%%%%%%%%%%%%%%%%%%%%%%%%%%%%
\begin{abstract} Regular sequences are natural generalisations of fixed points of constant-length substitutions on finite alphabets, that is, of automatic sequences. Using the harmonic analysis of measures associated with substitutions as motivation, we study the limiting asymptotics of regular sequences by constructing a systematic measure-theoretic framework surrounding them. The constructed measures are generalisations of mass distributions supported on attractors of iterated function systems.\end{abstract}
%%%%%%%%%%%%%%%%%%%%%%%%%%%%%%%%%%%%%%%%%%%%%%%%

\maketitle

%%%%%%%%%%%%%%%%%%%%%%%%%%%%%
\section{Introduction}
%%%%%%%%%%%%%%%%%%%%%%%%%%%%%

A sequence $f$ is called \emph{$k$-automatic} if there is a deterministic finite automaton that takes in the base-$k$ expansion of a positive integer $n$ and outputs the value $f(n)$. Automatic sequences are ubiquitous in number theory, theoretical computer science and symbolic dynamics, and can be described in many ways, though the one we find most convenient is via the $k$-kernel, $${\rm ker}_k ({f}):=\left\{(f(k^\ell n+r))_{n\geqslant 0}: \ell\geqslant 0, 0\leqslant r<k^\ell\right\}.$$ A sequence $f$ is $k$-automatic if and only if its $k$-kernel is finite \cite[Prop.~V.3.3]{E1974}. It is immediate that an automatic sequence takes only a finite number of values. A natural generalisation to sequences that can be unbounded was given in the early nineties by Allouche and Shallit \cite{AS1992}; an integer sequence $f$ is called \emph{$k$-regular} if the $\mathbb{Q}$-vector space $$\mathcal{V}_k(f):=\langle{\rm ker}_k ({f})\rangle_\mathbb{Q}$$ generated by the $k$-kernel of $f$ is finite-dimensional over $\mathbb{Q}$. One nice property of this generalisation is that a bounded regular sequence is automatic. Additionally, the set of $k$-regular sequences has algebraic structure, it forms a ring under point-wise addition and (Cauchy) convolution. 

The study of automatic sequences is rich from both number-theoretical and dynamical viewpoints. Much of the number-theoretic literature on automatic sequences mirrors that of the rational-transcendental dichotomies of integer power series proved in the first third of the twentieth century, such as those of Fatou \cite{F1906} and Szeg\H{o} \cite{S1922}, and culminating with the work of Carlson \cite{C1921}, who proved that if $F(z)\in\mathbb{Z}[[z]]$ converges in the unit disc, then either $F(z)$ has the unit circle as a natural boundary or it is the power series expansion of a rational function. The most celebrated result for automatic sequences is the resolution of the Cobham--Loxton--van der Poorten Conjecture by Adamczewski and Bugeaud \cite{AB2007}---a real number whose base expansion is given by an automatic sequence is either rational or transcendental. 

The dynamical literature has focussed on the study of automatic sequences through their related substitution systems---every automatic sequence is a coding of an infinite fixed point of a constant-length substitution on finitely many letters; see Cobham \cite{C1972}. The long-range order of substitution systems has been well studied; there is an abundance of literature on this still very active area of research going back to the seminal works of Wiener \cite{W1927} and Mahler \cite{M1927}. The monographs by Queff\'elec \cite{Qbook} and Baake and Grimm \cite{BGbook} contain details about both the tiling and symbolic pictures of these systems as well as the associated diffraction and spectral measures---the modern-classical means of examining the long-range order of these systems. The spectral results concerning substitution systems are not dichotomies, but classifications based on the Lebesgue Decomposition Theorem. In this context, one starts with a substitution and forms a measure---usually the spectral measure or the diffraction measure. The culminating result is then determining the spectral type of the measure. The Thue--Morse dynamical system is a paradigmatic example; the Thue--Morse sequence is $2$-automatic and the fixed point of the substitution on two letters given by 
\begin{equation*}
   \varrho_{_{\rm TM}}: \, \begin{cases} a \mapsto ab \\ 
     b\mapsto ba\, . \end{cases}
\end{equation*} 
Using appropriate weights, the diffraction measure of the Thue--Morse sequence is purely singular continuous with respect to Lebesgue measure. This result---the first to explicitly record a singular continuous measure---was proved by Mahler \cite{M1927}, and later in a dynamical setting by Kakutani~\cite{K1972}. For details on diffraction see Baake and Grimm \cite[Ch.~9]{BGbook}.

Transitioning to regular sequences, the number-theoretic story is much the same as automatic sequences, mirroring that of rational-transcendental dichotomies. The generalisation of the Cobham--Loxton--van der Poorten Conjecture for regular sequences was proved by Bell, Bugeaud and Coons \cite{BBC2015}. But, in contrast to automatic sequences, the study of the long-range order of unbounded regular sequences $f$, and so also the related spaces $\mathcal{V}_k(f)$, is not so straight-forward; neither diffraction nor spectral measures can be associated with them in a natural way. 

As a first step of addressing the long-range order of such objects, Baake and Coons~\cite{BC2018} introduced a natural probability measure associated with Stern's diatomic sequence, one simple example of an unbounded regular sequence. Stern's diatomic sequence ${s}$ is given by $s(0)=0$, $s(1)=1$, and for $n\geqslant 1$ by the recurrences $s(2n)=s(n)$ and $s(2n+1)=s(n)+s(n+1)$. Its $2$-kernel ${\rm ker}_2 ({s})$ is infinite, but induces a finitely generated vector space; $$\mathcal{V}_2(s)=\langle{\rm ker}_2 ({s})\rangle_{\mathbb{Q}}=\langle\{(s(n))_{n\geqslant 0},(s(n+1))_{n\geqslant 0}\}\rangle_{\mathbb{Q}}.$$ Stern's diatomic sequence has many interesting properties, e.g., the sequence of ratios $(s(n)/s(n+1))_{n\geqslant 0}$ enumerates the non-negative rationals in reduced form and without repeats, and---like the diffraction measure of the Thue--Morse sequence---the associated measure is singular continuous. Baake's and Coons's result rests on the fact that the sequence ${s}$ satisfies certain self-similar type properties; it has a fundamental region of recursion---between consecutive powers of two---and the sum of ${s}$ over this fundamental region is linearly recurrent, enabling the use of a volume-averaging process. 

In this paper, we provide the complete generalisation of the result of Baake and Coons~\cite{BC2018}. This entails not only taking a $k$-regular sequence $f$ and associating a natural probability measure $\mu_f$ to $\mathcal{V}_k(f)$, but also determining the class of regular sequences $f$ for which this is possible. Indeed, there are regular sequences for which such measures do not exist, and for which the sequence of approximants is not even eventually periodic.

The measures $\mu_f$ we construct are analogous to mass distributions supported on attractors of certain iterated function systems---those constructed by repeated subdivision; see Falconer~\cite{Fbook} for related definitions and Coons and Evans~\cite{CEpre} for a family of extended examples related to generalised Cantor sets. Here, rather than iterating intervals under a finite number of maps, the finite approximants are pure point measures on the unit interval, whose corresponding weights possess the recursive structure. %, and whose respective supports eventually exhaust the interval in the limit. 
In this way, the existence of a natural measure associated with $f$ and $\mathcal{V}_k(f)$ provides a path to viewing the space $\mathcal{V}_k(f)$ of regular sequences associated with $f$ as a dynamical system and opens the possibility of considering these sequences and spaces as (fractal) geometric structures.

To see how this works, we start with an integer-valued $k$-regular sequence $f$ and obtain a basis for $\mathcal{V}_k(f)$, the $\mathbb{Q}$-vector space generated by the $k$-kernel of $f$, which is maximal with respect to asymptotic growth over certain fundamental regions. In particular, let $k\geqslant 2$ be an integer, $f$ be a $k$-regular sequence and let the set of integer sequences \begin{equation}\label{basis}\{f=f_1,f_2,\ldots,f_d\}\subseteq {\rm ker}_k ({f})
\end{equation} be a basis for $\mathcal{V}_k(f):=\langle{\rm ker}_k ({f})\rangle_\mathbb{Q}$. Set ${\bf f}(m)=(f_1(m),f_2(m),\ldots,f_d(m))^T$. For each $a\in\{0,\ldots,k-1\}$ let ${\bf B}_a$ be the $d\times d$ integer matrix such that, for all $m\geqslant 0$, 
\begin{equation}\label{eq:k kernel}
{\bf f}(km+a)={\bf B}_a\, {\bf f}(m).
\end{equation} 
We refer the reader to the seminal paper of Allouche and Shallit \cite{AS1992} and Nishioka's monograph \cite[Ch.~15]{N1996} for details on existence and the finer definitions. Note that there is ${\bf w}\in\mathbb{Z}^{d\times 1}$ such that for each $i\in\{1,\ldots,d\}$ and $n>0$, we have $$f_i(m)=e_i^T {\bf B}_{(m)_k} {\bf w}=e_i^T{\bf B}_{i_0}{\bf B}_{i_1}\cdots {\bf B}_{i_s}{\bf w},$$ where $e_i$ is the $i$th elementary column vector, $(m)_k=i_s\cdots i_1i_0$ is the base-$k$ expansion of $m$ and ${\bf B}_{(m)_k}:={\bf B}_{i_0}{\bf B}_{i_1}\cdots {\bf B}_{i_s}.$ Set $${\bf B}:=\sum_{a=0}^{k-1}{\bf B}_a.$$ 

As we start to view the vector space $\mathcal{V}_k(f)$ dynamically, we note that the matrix ${\bf B}$ is analogous to the substitution matrix ${\bf M}_\varrho$ of a substitution $\varrho$ on a finite alphabet. In fact, if $\varrho$ is a constant-length substitution of length $k$ on the $d$ letters $0,1,\ldots,d-1$, then the matrices ${\bf B}_a$ are the so-called digit (instruction) matrices and ${\bf M}_\varrho={\bf B}$; compare \cite{BGM2019,Qbook}. When considering the dynamical properties of substitution systems it is common to assume that the substitution is primitive; that is, the non-negative substitution matrix ${\bf M}_\varrho$ is primitive. To continue our analogy with substitutions, we make similar assumptions. At first glance, it is reasonable to restrict ourselves to the assumption that ${\bf B}$ is primitive. But if this is the case for the $k$-regular sequence $f$, then we can consider $f$ as a $k^j$-regular sequence with $j$ being the smallest positive integer for which ${\bf B}^j$ is positive. Thus, in the context of $k$-regular sequences the distinction between primitivity and positivity is somewhat blurred. Note that this is also tacitly done for substitutions, where one normally chooses an appropriate power $j$ such that ${\bf M}_{\varrho}^{j}>0$, or equivalently, $\varrho^{j}(a)$ contains all the letters of the alphabet $\mathcal{A}$, for all $a\in \mathcal{A}$. Hence we make the following definition.

\begin{definition}\label{def:prim} We call a $k$-regular sequence $f$ {\em primitive} provided $f$ takes non-negative integer values, is not eventually zero, each of the $k$ digit matrices ${\bf B}_a$ are non-negative and the matrix ${\bf B}$ is positive.
\end{definition}

\noindent In Section \ref{nonprim}, we discuss the necessity of these assumptions. In particular, we present a sequence whose associated matrix is not primitive and show that our construction does not converge to a measure for this sequence.

We are close to being able to state our results---the final ingredients are the introduction of fundamental regions and the definition of the related pure point measures. To this end, we note that, in analogy to the lengths of the iterates of a substitution applied to an initial seed, a $k$-regular sequence $f$ has a fundamental region, the interval $[k^n,k^{n+1})$. Over these regions, the sum of a regular sequence is linearly recurrent, a property which can be thought of as a sort of self-similarity property for regular sequences. This property provides a structured volume to average by, and using it, we can construct a probability measure. Formally, for each $i\in\{1,\ldots,d\}$, set \begin{equation}\label{eq:Sigmai}\varSigma_i(n):=\sum_{m=k^n}^{k^{n+1}-1}f_i(m)\end{equation} and define
\begin{equation}\label{eq:def-meas}
   \mu^{}_{n,i} \, := \, \frac{1}{\varSigma_i(n)} \sum_{m=0}^{k^{n+1}-k^n - 1}
   f_i(k^n + m) \, \delta^{}_{m / k^n(k-1)},
\end{equation}
where $\delta_x$ denotes the unit Dirac measure at $x$. We can view
$(\mu^{}_{n,i})^{}_{n\in\mathbb{N}_0}$ as a sequence of probability measures on
the $1$-torus, the latter written as $\mathbb{T}=[0,1)$ with addition modulo
$1$. Here, we have simply re-interpreted the (normalised) values of 
the sequence $(f_i(m))_{m\geqslant 0}$ between $k^n$ and $k^{n+1}-1$ as the weights of a pure point
probability measure on $\mathbb{T}$ supported on the set $\big\{ \frac{m}{k^n(k-1)} : 0 \leqslant m <
k^n(k-1) \big\}$. Set \begin{equation}\label{eq:mun}\boldsymbol{\mu}_{n}=\boldsymbol{\mu}_{f,n}:=(\mu_{n,1},\ldots,\mu_{n,d})^T.\end{equation}

Our first result, Theorem \ref{main1} below, provides a unique probability measure on $\mathbb{T}$ associated with the space $\mathcal{V}_k(f)$. 

\begin{theorem}\label{main1} Let $f$ be a primitive $k$-regular sequence. If $f_1,\ldots,f_d$ form the basis of $\mathcal{V}_k(f)$ associated with ${\bf B}$, then the vectors $\boldsymbol{\mu}_{f,n}$ of pure point measures converges weakly to a vector of probability measures $\boldsymbol{\mu}_f=(\mu_f,\ldots,\mu_f)^T$ on $\mathbb{T}$. 
\end{theorem}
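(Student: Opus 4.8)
The plan is to test weak convergence against characters. Since $\mathbb{T}$ is compact and every $\mu_{n,i}$ is a probability measure, by Stone--Weierstrass it suffices to show that each Fourier--Stieltjes coefficient $\widehat{\mu_{n,i}}(\ell)=\int_{\mathbb{T}}\ee^{-2\pi\ii\ell x}\,\mathrm{d}\mu_{n,i}(x)$ converges as $n\to\infty$, and moreover that the limit does not depend on $i$; the common limiting sequence $(\widehat{\mu_f}(\ell))_{\ell\in\mathbb{Z}}$ is then positive definite with value $1$ at $\ell=0$, and by Bochner's theorem defines the single probability measure $\mu_f$. Writing
\[
\mathbf{v}_n(\ell):=\sum_{m=0}^{k^n(k-1)-1}\mathbf{f}(k^n+m)\,\ee^{-2\pi\ii\ell m/(k^n(k-1))},
\]
the definition \eqref{eq:def-meas} gives $\widehat{\mu_{n,i}}(\ell)=e_i^{T}\mathbf{v}_n(\ell)\big/e_i^{T}\mathbf{v}_n(0)$, with $\mathbf{v}_n(0)=\boldsymbol{\varSigma}(n)$.

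Next I would extract the renormalisation hidden in \eqref{eq:k kernel}. Splitting $m=kj+a$ and using $\mathbf{f}(kj+a)=\mathbf{B}_a\mathbf{f}(j)$ turns the level-$n$ sum into the level-$(n-1)$ sum, yielding
\[
\mathbf{v}_n(\ell)=\mathbf{B}(z_n)\,\mathbf{v}_{n-1}(\ell),\qquad \mathbf{B}(z):=\sum_{a=0}^{k-1}z^{a}\mathbf{B}_a,\quad z_n:=\ee^{-2\pi\ii\ell/(k^n(k-1))}.
\]
In particular $\mathbf{B}(1)=\mathbf{B}$ and $\mathbf{v}_n(0)=\mathbf{B}^{n}\mathbf{v}_0(0)$. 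Since $z_n\to1$ with $|z_n-1|=O(k^{-n})$, the factors $\mathbf{B}(z_n)$ approach $\mathbf{B}$ summably fast, i.e.\ $\sum_n\|\mathbf{B}(z_n)-\mathbf{B}\|<\infty$. Because each $f_i$ is a non-negative member of $\mathrm{ker}_k(f)$, one also has the entrywise domination $|\mathbf{v}_n(\ell)|\leqslant\boldsymbol{\varSigma}(n)$, which provides the uniform bound needed below.

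The analytic heart is then the convergence of the resulting perturbed matrix product. As $\mathbf{B}$ is positive, Perron--Frobenius gives a simple dominant eigenvalue $\lambda>0$ with strictly positive right and left eigenvectors $\mathbf{v}_{\mathrm{PF}},\mathbf{u}_{\mathrm{PF}}$ (normalised by $\mathbf{u}_{\mathrm{PF}}^{T}\mathbf{v}_{\mathrm{PF}}=1$) and a spectral splitting $\mathbb{R}^{d}=\langle\mathbf{v}_{\mathrm{PF}}\rangle\oplus W$ on which $\mathbf{B}/\lambda$ acts as the identity and as a contraction, respectively. Setting $\mathbf{w}_n:=\mathbf{v}_n(\ell)/\lambda^{n}$ and $E_n:=(\mathbf{B}(z_n)-\mathbf{B})/\lambda$, the recursion reads $\mathbf{w}_n=(\mathbf{B}/\lambda+E_n)\mathbf{w}_{n-1}$ with $(\mathbf{w}_n)$ bounded and $\sum_n\|E_n\|<\infty$. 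Decomposing $\mathbf{w}_n=\alpha_n\mathbf{v}_{\mathrm{PF}}+\mathbf{r}_n$ with $\mathbf{r}_n\in W$, the $W$-component satisfies $\mathbf{r}_n=(\mathbf{B}/\lambda)\mathbf{r}_{n-1}+O(\|E_n\|)$, so contraction together with a summable, vanishing forcing term forces $\mathbf{r}_n\to0$; and $\mathbf{u}_{\mathrm{PF}}^{T}\mathbf{B}=\lambda\,\mathbf{u}_{\mathrm{PF}}^{T}$ gives $\alpha_n=\alpha_{n-1}+\mathbf{u}_{\mathrm{PF}}^{T}E_n\mathbf{w}_{n-1}$, a telescoping sum with summable increments, so $\alpha_n\to\alpha(\ell)$. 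Hence $\mathbf{v}_n(\ell)/\lambda^{n}\to\alpha(\ell)\,\mathbf{v}_{\mathrm{PF}}$ --- a scalar multiple of the \emph{same} Perron vector for every $\ell$ (with $\alpha(0)=\mathbf{u}_{\mathrm{PF}}^{T}\mathbf{v}_0(0)>0$).

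Dividing numerator and denominator by $\lambda^{n}$ then gives
\[
\widehat{\mu_{n,i}}(\ell)=\frac{e_i^{T}\mathbf{w}_n(\ell)}{e_i^{T}\mathbf{w}_n(0)}\;\longrightarrow\;\frac{\alpha(\ell)\,(\mathbf{v}_{\mathrm{PF}})_i}{\alpha(0)\,(\mathbf{v}_{\mathrm{PF}})_i}=\frac{\alpha(\ell)}{\alpha(0)},
\]
which is independent of $i$ precisely because the common limiting direction is $\mathbf{v}_{\mathrm{PF}}$, so the positive entries of $\mathbf{v}_{\mathrm{PF}}$ cancel. This yields both the existence of the limit and the coincidence of all components, completing the argument. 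I expect the main obstacle to be the third step: controlling the infinite product of the perturbed matrices $\mathbf{B}(z_n)$, in particular showing that the non-Perron part $\mathbf{r}_n$ dies and the Perron coefficient $\alpha_n$ stabilises. The boundedness of $\mathbf{w}_n$ coming from $|\mathbf{v}_n(\ell)|\leqslant\boldsymbol{\varSigma}(n)$ is what makes the perturbation estimates uniform, and primitivity (positivity) of $\mathbf{B}$ is exactly what guarantees the spectral gap that drives the contraction.
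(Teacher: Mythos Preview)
Your argument is correct, and it reaches the conclusion by a genuinely different mechanism than the paper. Both proofs pass to Fourier coefficients and exploit the same recursion $\mathbf{v}_n(\ell)=\mathbf{B}(z_n)\,\mathbf{v}_{n-1}(\ell)$, but they control the resulting matrix product differently. The paper normalises at the level of measures: it divides each row by $\varSigma_i(n)$ to form matrices $\mathbf{A}_n(z)$ with $\mathbf{A}_n(1)$ \emph{Markov}, proves equicontinuity of the partial products $\mathbf{S}^{(m,\ell)}(t)$ at $t=0$ via a telescoping estimate that uses $\|\mathbf{S}^{(j,\ell)}(0)\|_\infty=1$, and then closes with the convergence of $\mathbf{A}(1)^n$ to the rank-one Perron projector. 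Your route instead normalises by $\lambda^n$ and treats the product as a summable perturbation of $(\mathbf{B}/\lambda)^n$: the Perron--Frobenius spectral gap contracts the $W$-component $\mathbf{r}_n$ to zero, while the left-eigenvector identity $\mathbf{u}_{\mathrm{PF}}^T\mathbf{B}=\lambda\,\mathbf{u}_{\mathrm{PF}}^T$ makes $\alpha_n$ a telescoping series with absolutely summable increments.

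What each buys: the paper's Markov normalisation keeps every approximant a bona fide probability vector, so the estimates $\|\cdot\|_\infty\leqslant 1$ come for free and the convergence statement is automatically uniform in $t$ on compacta; it also generalises more readily to settings where one wants to track all basis measures simultaneously. Your perturbation argument is more elementary and self-contained---no equicontinuity, no Markov steady-state limit---and the key estimates (boundedness of $\mathbf{w}_n$ from $|\mathbf{v}_n(\ell)|\leqslant\boldsymbol{\varSigma}(n)$, summability of $\|E_n\|$ from $|z_n-1|=O(k^{-n})$) are transparent. The one point worth making explicit in a write-up is that $\alpha(0)=\mathbf{u}_{\mathrm{PF}}^T\boldsymbol{\varSigma}(0)>0$: this holds because $\boldsymbol{\varSigma}(0)\geqslant 0$ is nonzero (else $\boldsymbol{\varSigma}(n)=\mathbf{B}^n\boldsymbol{\varSigma}(0)=0$ for all $n$, forcing $f$ eventually zero and contradicting primitivity) and $\mathbf{u}_{\mathrm{PF}}$ is strictly positive.
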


Of course, one may also wish to consider the measure associated with $f$ apart from the full considerations surrounding that of the space $\mathcal{V}_k(f)$. With a little more specificity one can prove a stronger result, which is roughly as follows.

\begin{theorem}\label{main2} Let $f$ be a positive integer-valued $k$-regular sequence such that the spectral radius, $\rho({\bf B})$, is the unique dominant eigenvalue of ${\bf B}$ and the joint spectral radius, $\rho^*(\{{\bf B}_0,\ldots,{\bf B}_{k-1}\})<\rho({\bf B})$. Then the measure $\mu_f$ exists and is continuous. 
\end{theorem}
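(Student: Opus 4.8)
The plan is to pass to the Fourier--Stieltjes side and read \eqref{eq:k kernel} as a renormalisation (dilation) equation. For $\ell\in\mathbb Z$ write $\widehat{\nu}(\ell)=\int_{\mathbb T}\ee^{-2\pi\ii\ell x}\,\mathrm{d}\nu$ and collect the unnormalised transforms into the vector $\boldsymbol\Phi_n(\ell):=(\varSigma_i(n)\,\widehat{\mu}_{n,i}(\ell))_{i=1}^d$. Splitting each index in $[k^{n+1},k^{n+2})$ as $k(k^n+m)+r$, using ${\bf f}(km+r)={\bf B}_r{\bf f}(m)$ and the identity $\tfrac{km+r}{k^{n+1}(k-1)}=\tfrac{m}{k^{n}(k-1)}+\tfrac{r}{k^{n+1}(k-1)}$, one gets the exact recursion
\[
\boldsymbol\Phi_{n+1}(\ell)={\bf B}(\ell,n)\,\boldsymbol\Phi_n(\ell),\qquad
{\bf B}(\ell,n):=\sum_{r=0}^{k-1}\ee^{-2\pi\ii\,\ell r/(k^{n+1}(k-1))}\,{\bf B}_r .
\]
Since the phases tend to $1$ we have ${\bf B}(\ell,n)\to{\bf B}$ with $\|{\bf B}(\ell,n)-{\bf B}\|=O(|\ell|\,k^{-n})$, so everything is governed by ${\bf B}$. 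Writing $\lambda:=\rho({\bf B})$ for the unique, simple dominant eigenvalue, one has $\lambda^{-n}{\bf B}^n\to P=vu^{T}/(u^{T}v)$, the rank-one spectral projection, with ${\bf B}v=\lambda v$ and $u^{T}{\bf B}=\lambda u^{T}$; applying this to $\boldsymbol\varSigma(n)=(\varSigma_1(n),\dots,\varSigma_d(n))^{T}={\bf B}^{n}({\bf B}-I){\bf f}(0)$ yields $\varSigma_i(n)\sim(\lambda-1)\tfrac{u^{T}{\bf f}(0)}{u^{T}v}\,\lambda^{n}v_i$, a common rate $\lambda^n$ with positive constants.

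For existence it suffices, by compactness of $\mathbb T$, to show that $\widehat{\mu}_{n,1}(\ell)$ converges for each fixed $\ell$. Put $\boldsymbol\psi_n:=\lambda^{-n}\boldsymbol\Phi_n(\ell)$, so that $\boldsymbol\psi_{n+1}=(C+E_n)\boldsymbol\psi_n$ with $C:=\lambda^{-1}{\bf B}$ and $E_n:=\lambda^{-1}({\bf B}(\ell,n)-{\bf B})$, where $\sum_n\|E_n\|<\infty$. As $C^n\to P$, the matrix $C$ is power-bounded, so a discrete Gr\"onwall estimate keeps $\|\boldsymbol\psi_n\|$ bounded and the forcing $E_n\boldsymbol\psi_n$ absolutely summable. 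The variation-of-constants expansion $\boldsymbol\psi_n=C^{n}\boldsymbol\psi_0+\sum_{j<n}C^{\,n-1-j}E_j\boldsymbol\psi_j$, together with $C^n\to P$ and $C^m\to0$ on $\ker P$, then forces $\boldsymbol\psi_n$ to converge to a vector on the Perron line $\mathbb R v$, say $\psi(\ell)v$. Dividing coordinatewise by $\boldsymbol\varSigma(n)$ the factor $v_i$ cancels, so $\widehat{\mu}_{n,i}(\ell)$ converges to a limit independent of $i$; hence $\boldsymbol\mu_{f,n}$ converges weakly to $(\mu_f,\dots,\mu_f)^{T}$ and $\mu_f$ exists, consistently with Theorem~\ref{main1}.

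For continuity I would compute the mass $\mu_f$ assigns to cylinders. Iterating \eqref{eq:k kernel} gives $\sum_{m=0}^{k^{j}-1}{\bf f}(k^{j}b+m)={\bf B}^{j}{\bf f}(b)$, so fixing the top $p$ base-$k$ digits of the index to a string $b$ cuts out a fixed interval $I_{p,b}\subset\mathbb T$ with $\mu_{n,1}(I_{p,b})=[{\bf B}^{n+1-p}{\bf f}(b)]_1/\varSigma_1(n)$; letting $n\to\infty$ and using ${\bf B}^{n+1-p}\sim\lambda^{n+1-p}P$ yields
\[
\mu_f(I_{p,b})=\frac{\lambda^{1-p}\,u^{T}{\bf f}(b)}{(\lambda-1)\,u^{T}{\bf f}(0)} .
\]
These weights are nonnegative and sum to $1$ over the $p$-digit prefixes, and appending a digit $c$ replaces ${\bf f}(b)$ by ${\bf B}_c{\bf f}(b)$; since $\sum_c{\bf B}_c={\bf B}$ and $u^{T}{\bf B}=\lambda u^{T}$, the children's masses sum to the parent's, so along any $k$-adic path $(b_p)$ the quantities $g_p:=\lambda^{-p}u^{T}{\bf f}(b_p)$ are non-increasing. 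By the portmanteau inequality for open sets, $\mu_f(\{x_0\})$ is bounded by the masses of the cylinders shrinking to $x_0$, so $\mu_f$ is continuous precisely when $g_p\downarrow 0$ along every path.

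The hard part will be exactly this last step: the relation $g_p\le g_{p-1}$ only gives boundedness, and a positive limit $g_\infty>0$ is precisely an atom, carried by a path realising the maximal growth rate $\lambda$. Excluding this is where both hypotheses must be used together: positivity of $f$ forces every ${\bf f}(b)$, and hence each sibling mass $u^{T}{\bf B}_{c}{\bf f}(b)$, to be genuinely positive, so that no single digit matrix can absorb the full Perron growth, while the strict dominance of $\lambda$ over the rest of the spectrum of ${\bf B}$ should convert this qualitative fact into the quantitative gap that forces $g_p\to0$ (uniformly, yielding H\"older regularity of the distribution function with exponent governed by $\log_k\lambda$). I expect the genuine work to lie in establishing this gap --- equivalently, that the maximal pointwise growth of $f$ is of strictly smaller exponential order than the block-sum growth $\lambda^{n}$ --- rather than in the surrounding bookkeeping.
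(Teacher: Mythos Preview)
Your approach differs substantially from the paper's, and the continuity half contains a genuine gap that you yourself flag.

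\medskip\noindent\textbf{How the paper proceeds.} The paper does not argue via Fourier coefficients or cylinder masses at all. Instead it invokes Dumas' asymptotic theorem (Theorem~\ref{thm:Dumas}), which expresses $\sum_{m\le x}f(m)$ in terms of the solution ${\bf F}_\rho$ of the matrix dilation equation~\eqref{dilation}. Under the standing hypothesis $\rho({\bf B})>\rho^{*}(\{{\bf B}_0,\dots,{\bf B}_{k-1}\})$, this ${\bf F}_\rho$ is H\"older continuous with any exponent $\alpha<\log_k(\rho/\rho^{*})$, and the paper reads off an explicit formula for $\mu_f([0,x])$ as a ratio of values of ${\bf F}_\rho$ (Theorem~\ref{main2explicit}). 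Continuity of $\mu_f$ is then immediate from the H\"older continuity of its distribution function; positivity of $f$ is used only at the very end, to guarantee that $\mu_f([0,x])$ is monotone and hence of bounded variation, so that it really is the distribution function of a measure (Theorem~\ref{BVexplicit}).

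\medskip\noindent\textbf{Your existence argument.} The variation-of-constants argument for convergence of $\lambda^{-n}\boldsymbol{\Phi}_n(\ell)$ is sound \emph{provided} the dominant eigenvalue is simple, so that $\lambda^{-n}{\bf B}^n\to P$. But ``unique dominant eigenvalue'' in the paper's sense allows a nontrivial Jordan block at $\rho$ (this is exactly why the machinery ${\bf J}_\rho$, ${\bf P}_\rho(n)$, ${\bf Z}$ appears in Section~\ref{sec:f}); in that case $\lambda^{-n}{\bf B}^n$ does not converge and your perturbation scheme collapses. You also use, without justification, that $v_1\neq 0$ and $u^{T}{\bf f}(0)\neq 0$; these are the analogues of the non-vanishing condition~\eqref{zjcond}.

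\medskip\noindent\textbf{The real gap: continuity.} You correctly reduce continuity to $g_p=\lambda^{-p}u^{T}{\bf f}(b_p)\to 0$ along every $k$-adic path, and correctly identify this as equivalent to the maximal pointwise growth of $f$ being strictly below $\lambda^n$. But your heuristic---positivity of $f$ forces every sibling mass $u^{T}{\bf B}_c{\bf f}(b)$ to be strictly positive, so no single child absorbs everything---does not close the gap. First, positivity of $f=f_1$ says nothing about the signs of the other coordinates of ${\bf f}(b)$, and since ${\bf B}$ is not assumed non-negative here, the left eigenvector $u$ need not be positive; so $u^{T}{\bf B}_c{\bf f}(b)>0$ is not automatic. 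Second, even granting positivity of all sibling masses, the sequence $g_p$ is merely non-increasing: one child's share could tend to $1$, producing an atom. What is actually required is a \emph{uniform} contraction, and that is precisely the content of the joint spectral radius inequality $\rho^{*}<\rho$: it gives $\|{\bf f}(b_p)\|\le C(\rho^{*}+\varepsilon)^p$, hence $g_p\le C'\big((\rho^{*}+\varepsilon)/\rho\big)^p\to 0$, with the H\"older exponent $\log_k(\rho/\rho^{*})$ falling out. This hypothesis is stated explicitly in the paper's more detailed Theorems~\ref{main2explicit} and~\ref{BVexplicit}, and is what does the work; it is not derivable from the spectral gap of ${\bf B}$ alone, contrary to your final paragraph's hope.
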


While, in Theorems \ref{main1} and \ref{main2} we have assumed that $f$ is non-negative, in Theorem~\ref{main2} this assumption can be traded for a more general condition. In Section \ref{sec:f}, we prove a slightly stronger result, where we replace this non-negativity assumption with two conditions both having to do with a related dilation equation. The first condition is  a natural non-vanishing condition and the second is that the solution of the associated dilation equation is of bounded variation. This more general situation may not result in a probability measure, but possibly, a signed measure.

This article is organised as follows. Section~\ref{sec:V} contains the proof of our first theorem, the existence of a measure associated with $\mathcal{V}_k(f)$, and Section~\ref{sec:f} contains the general technical result that implies our second theorem. In Section~\ref{nonprim}, we discuss the necessity of our assumptions and provide witnessing examples. Finally, we offer some concluding remarks and open questions in Section~\ref{sec:cr}.

%%%%%%%%%%%%%%%%%%%%%%%%%%%%%
\section{A natural probability measure associated with $\mathcal{V}_k(f)$}\label{sec:V}
%%%%%%%%%%%%%%%%%%%%%%%%%%%%%

The aim of this section is to prove Theorem \ref{main1}. To do this, we attempt to mimic the ideas behind the establishment of spectral measures associated with substitution dynamical systems. Fortunately, the generating power series of $k$-regular sequences satisfy functional equations that can be thought of as taking the place of a substitution---the generating power series of the sequences $f_i$ in \eqref{basis} are Mahler functions. That is, for each $i\in\{1,\ldots,d\}$ setting $F_i(z):=\sum_{{m}\geqslant 0}f_i({m}) z^{{m}}$ and ${\bf F}(z)=(F_1(z),\ldots,F_d(z))^T$, we have \cite[p.~153]{N1996} that ${\bf F}(z)$ satisfies the Mahler functional equation \begin{equation}\label{MFE}{\bf F}(z)={\bf B}(z)\, {\bf F}(z^k),\end{equation} where ${\bf B}(z):=\sum_{a=0}^{k-1}{\bf B}_a\, z^a.$ The matrix-valued function ${\bf B}(z)$ is analogous to the Fourier cocycle in the renormalisation theory of substitution and inflation systems, which carries information about features of the underlying diffraction and spectral measures; see Bufetov and Solomyak \cite{BS2014} and Baake, G\"ahler and Ma\~nibo~\cite{BGM2019}. Note that ${\bf B}(1)={\bf B}$; this specialisation will be discussed more below. Equation \eqref{MFE} shows that the functions $F_i(z)$ behave well under the Frobenius map $z\mapsto z^k$. The functional equation \eqref{MFE} is analogous to a substitution with repeated application mirroring the iterated composition of a substitution. This property is essentially what allows us to form certain cocycles (e.g., see \eqref{AmSm}) that, under the primitivity assumption above, have convergence properties which provide for the existence of the desired limit measures. To achieve our goal, we require a few preliminary results.

For $i\in\{0,\ldots, k-1\}$ let $\varSigma_i(n)$ be as defined in \eqref{eq:Sigmai} and set $${\bf\Sigma}(n):=(\varSigma_1(n),\varSigma_2(n),\ldots,\varSigma_d(n))^T.$$ In the Introduction, we stated that the sequences $\varSigma_i(n)$ are linearly recurrent; we prove this result as the following lemma.

\begin{lemma}\label{lem:Sigma n}
If $n\geqslant 1$, then ${\bf\Sigma}(n)={\bf B}\cdot {\bf\Sigma}(n-1).$
\end{lemma}

\begin{proof} Here we are considering the sums of each $f_i$ over all integers in the interval $[k^n,k^{n+1}-1]$. These are precisely all of the integers that have $n+1$ digits in their $k$-ary expansions. Noting that the $k$-ary expansion of a non-zero integer cannot begin with a zero, we thus have that $\varSigma_i(n)$ satisfies  $$\sum_{m=k^n}^{k^{n+1}-1}f_i(m) =\sum_{m=k^n}^{k^{n+1}-1}e_i^T {\bf B}_{(m)_k}{\bf w}=e_i^T \left({\bf B}_0+{\bf B}_1+\cdots+{\bf B}_{k-1}\right)^n\sum_{a=1}^{k-1}{\bf B}_{a} {\bf w}.$$
Thus \begin{align*}{\bf\Sigma}(n)&=\left({\bf B}_0+{\bf B}_1+\cdots+{\bf B}_{k-1}\right)^n\sum_{a=1}^{k-1}{\bf B}_{a} {\bf w}\\ &=\left({\bf B}_0+{\bf B}_1+\cdots+{\bf B}_{k-1}\right){\bf\Sigma}(n-1),\end{align*} where the second equality follows from the first by writing $n=1+(n-1)$. 
\end{proof}

Now, define the polynomials $b_{ij}(z)$ by ${\bf B}(z)=\left(b_{ij}(z)\right)_{1\leqslant i,j\leqslant d}$ and for each $n\geqslant 1$ define the matrix 
\begin{equation}\label{eq:A matrix}
{\bf A}_n(z):=\left(\frac{\varSigma_j(n-1)\, b_{ij}(z)}{\varSigma_i(n)}\right)_{1\leqslant i,j\leqslant d}.
\end{equation} Note that since $f$ is primitive, the denominator $\varSigma_i(n)$ cannot vanish. The matrices ${\bf A}_n(z)$ are normalised versions of the matrix ${\bf B}(z)$. In particular, they allow us to lift the result of Lemma \ref{lem:Sigma n} to the level of measures; see Proposition \ref{prop:recursion mu} below. Before proving that result, we note the following corollary of Lemma \ref{lem:Sigma n}.

\begin{corollary} The matrix ${\bf A}_n(1)$ is a Markov matrix for all $n\geqslant 1$.
\end{corollary}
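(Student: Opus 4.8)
The plan is to specialise the defining formula \eqref{eq:A matrix} at $z=1$ and read off the row sums directly from Lemma \ref{lem:Sigma n}. Writing ${\bf B}=(B_{ij})$, the first observation is that $b_{ij}(1)=B_{ij}$, since ${\bf B}(1)={\bf B}$ by construction; hence the $(i,j)$-entry of ${\bf A}_n(1)$ is $\varSigma_j(n-1)\,B_{ij}/\varSigma_i(n)$.

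For non-negativity I would invoke the primitivity of $f$: each ${\bf B}_a$ is non-negative, so $B_{ij}=\sum_{a=0}^{k-1}({\bf B}_a)_{ij}\geqslant 0$, while the $\varSigma_i(n)$ are strictly positive---non-negative by construction and non-vanishing as already noted after \eqref{eq:A matrix}. Thus every entry of ${\bf A}_n(1)$ is non-negative and the normalising denominators are legitimate.

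It then remains to check that the rows sum to one. Fixing $i$ and summing over $j$,
\begin{equation*}
\sum_{j=1}^{d}\frac{\varSigma_j(n-1)\,B_{ij}}{\varSigma_i(n)}=\frac{1}{\varSigma_i(n)}\sum_{j=1}^{d}B_{ij}\,\varSigma_j(n-1)=\frac{\varSigma_i(n)}{\varSigma_i(n)}=1,
\end{equation*}
where the middle equality is exactly the $i$-th component of the recursion ${\bf\Sigma}(n)={\bf B}\cdot{\bf\Sigma}(n-1)$ supplied by Lemma \ref{lem:Sigma n}. Hence ${\bf A}_n(1)$ is a non-negative matrix all of whose rows sum to one, that is, a (row-)stochastic Markov matrix.

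There is no genuine obstacle here; the statement is an immediate corollary once one recognises that the normalisation chosen in \eqref{eq:A matrix} was designed precisely so that the linear recurrence of Lemma \ref{lem:Sigma n} reappears as the row-stochasticity condition. The only points requiring any care are the specialisation $b_{ij}(1)=B_{ij}$ and the observation that primitivity forces $\varSigma_i(n)>0$, so that the entries are both well-defined and non-negative.
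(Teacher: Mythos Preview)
Your proof is correct and matches the paper's approach exactly: the paper simply records this as an immediate corollary of Lemma~\ref{lem:Sigma n} without spelling out the details, and what you have written is precisely the intended unpacking---specialise $b_{ij}(1)=B_{ij}$, note non-negativity from primitivity, and read the row sums as the $i$th coordinate of ${\bf\Sigma}(n)={\bf B}\,{\bf\Sigma}(n-1)$.
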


For a Dirac measure $\delta_{x}$, we let $(\delta_x)^r=\delta_{rx}$ be its $r$-fold convolution product with itself. This allows one to define the value of $p(\delta_x)$, for any polynomial $p\in\mathbb{C}[z]$.  

\begin{proposition}\label{prop:recursion mu}
For $n\geqslant 1$, one has 
\begin{equation}\label{eq:recursion mu}
\boldsymbol{\mu}_n={\bf A}_n\big(\delta_{1/k^{n}(k-1)}\big)\, \boldsymbol{\mu}_{n-1}
\end{equation}
where ${\bf A}_n(z)$ is the matrix-valued function defined in Eq.~\eqref{eq:A matrix}. 
\end{proposition}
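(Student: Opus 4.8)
The plan is to verify \eqref{eq:recursion mu} entrywise by unwinding every definition and reducing the identity to the kernel recursion \eqref{eq:k kernel}. First I would make precise the meaning of ${\bf A}_n(\delta_x)$: writing $b_{ij}(z)=\sum_{a=0}^{k-1}({\bf B}_a)_{ij}\,z^a$ and using the convention $(\delta_x)^a=\delta_{ax}$ together with $(\delta_x)^0=\delta_0$, the symbol evaluated at a Dirac measure becomes $b_{ij}(\delta_x)=\sum_{a=0}^{k-1}({\bf B}_a)_{ij}\,\delta_{ax}$, so that the $(i,j)$ entry of ${\bf A}_n(\delta_x)$ is $\frac{\varSigma_j(n-1)}{\varSigma_i(n)}\sum_{a=0}^{k-1}({\bf B}_a)_{ij}\,\delta_{ax}$. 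Here the matrix acts on the vector of measures $\boldsymbol{\mu}_{n-1}$ through convolution, which on Dirac measures is just addition of the supporting points.

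With $x=1/k^{n}(k-1)$, the $i$th component of the right-hand side of \eqref{eq:recursion mu} is
\begin{equation*}
\frac{1}{\varSigma_i(n)}\sum_{j=1}^{d}\sum_{a=0}^{k-1}({\bf B}_a)_{ij}\,\delta_{a x}\ast\bigl(\varSigma_j(n-1)\,\mu_{n-1,j}\bigr).
\end{equation*}
Substituting $\varSigma_j(n-1)\,\mu_{n-1,j}=\sum_{m'}f_j(k^{n-1}+m')\,\delta_{m'/k^{n-1}(k-1)}$ and using $\delta_{ax}\ast\delta_{m'/k^{n-1}(k-1)}=\delta_{(a+km')/k^{n}(k-1)}$ (the key point being that the scale $x$ is chosen exactly so that $m'/k^{n-1}(k-1)=km'/k^{n}(k-1)$), all supporting points land on the finer grid $\{\tfrac{m}{k^n(k-1)}\}$ carrying $\mu_{n,i}$.

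The decisive step is to collapse the inner sum over $j$ using \eqref{eq:k kernel}. For fixed $a$ and $m'$ one has $\sum_{j}({\bf B}_a)_{ij}\,f_j(k^{n-1}+m')=\bigl({\bf B}_a\,{\bf f}(k^{n-1}+m')\bigr)_i=f_i\bigl(k(k^{n-1}+m')+a\bigr)=f_i(k^{n}+km'+a)$. Finally I would re-index by Euclidean division, writing $m=km'+a$ with $0\le a<k$: as $(a,m')$ ranges over $\{0,\dots,k-1\}\times\{0,\dots,k^{n}-k^{n-1}-1\}$, the integer $m$ ranges bijectively over $\{0,\dots,k^{n+1}-k^{n}-1\}$, and the supporting point becomes $m/k^n(k-1)$. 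This turns the expression into exactly $\frac{1}{\varSigma_i(n)}\sum_{m=0}^{k^{n+1}-k^n-1}f_i(k^n+m)\,\delta_{m/k^n(k-1)}=\mu_{n,i}$, as required. The only genuinely delicate bookkeeping is matching the two base-$k$ scales correctly and checking that the reindexing $m=km'+a$ is a bijection onto the right range; the algebraic core is the single application of the kernel relation \eqref{eq:k kernel}.
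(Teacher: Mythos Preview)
Your proof is correct and follows essentially the same route as the paper's own argument: expand the $i$th entry of the right-hand side, cancel the factor $\varSigma_j(n-1)$ against the normalisation in $\mu_{n-1,j}$, combine the Dirac measures via convolution to land on the finer grid, apply the kernel relation \eqref{eq:k kernel} to collapse the sum over $j$, and reindex via $m=km'+a$. The only differences are cosmetic---you are a bit more explicit about the meaning of $b_{ij}(\delta_x)$ and about why the reindexing is a bijection.
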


\begin{proof}
Fix an $i\in\{1,\ldots,d\}$ and consider the $i$th entry of the vector on the right-hand side of \eqref{eq:recursion mu}. For this entry, we have 
\begin{align*}
e_{i}^T{\bf A}_n\big(&\delta_{1/k^{n}(k-1)}\big)\, \boldsymbol{\mu}_{n-1}=\sum_{j=1}^d \frac{\varSigma_j(n-1)\, b_{ij}(\delta_{1/k^{n}(k-1)})}{\varSigma_i(n)} \mu_{n-1,j}\\
&=\frac{1}{\varSigma_i(n)}\sum_{j=1}^d  \sum_{\ell=0}^{k^{n}-k^{n-1}-1} b_{ij}(\delta_{1/k^{n}(k-1)}) f_{j}(k^{n-1}+\ell)\,\delta_{\ell/k^{n-1}(k-1)}\\
&=\frac{1}{\varSigma_i(n)}  \sum_{\ell=0}^{k^{n}-k^{n-1}-1}\sum_{a=0}^{k-1} \sum_{j=1}^d ({\bf B}_a)_{ij}\, (\delta_{a/k^{n}(k-1)}) f_{j}(k^{n-1}+\ell)\,\delta_{\ell/k^{n-1}(k-1)}\\
&=\frac{1}{\varSigma_i(n)}  \sum_{\ell=0}^{k^{n}-k^{n-1}-1}\sum_{a=0}^{k-1} \delta_{(k\ell+a)/k^{n}(k-1)} \sum_{j=1}^d ({\bf B}_a)_{ij} \, f_{j}(k^{n-1}+\ell)\\
&=\frac{1}{\varSigma_i(n)}  \sum_{\ell=0}^{k^{n}-k^{n-1}-1}\sum_{a=0}^{k-1} \delta_{(k\ell+a)/k^{n}(k-1)}\, f_{i}(k^{n}+k\ell+a)\\
&=\frac{1}{\varSigma_i(n)}  \sum_{m=0}^{k^{n+1}-k^{n}-1} f_{i}(k^{n}+m)\,\delta_{m/k^{n}(k-1)}\\
&=\mu_{n,i}=e_{i}^T \boldsymbol{\mu}_{n}.
\end{align*}
Here, the third equality follows using the definition of $b_{ij}(z)$, the fifth equality follows by invoking \eqref{eq:k kernel} and the sixth step is just a change of index.  
\end{proof}

The following is an immediate consequence of Proposition~\ref{prop:recursion mu}.

\begin{corollary}\label{coro:recursion mu hat} 
For $n\geqslant 1$, the Fourier coefficients $\widehat{\mu}_n(t)$ satisfy \begin{equation}\label{eq: recursion mu hat} \widehat{\boldsymbol{\mu}_n}(t)= {\bf A}_n\big(\ee^{-\frac{2\pi \ii t}{k^n(k-1)}}\big)\, \widehat{\boldsymbol{\mu}_{n-1}}(t).
\end{equation}
for all $t\in\mathbb{Z}$.
\end{corollary}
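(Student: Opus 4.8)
The plan is to obtain \eqref{eq: recursion mu hat} by applying the Fourier transform entrywise to the measure-valued identity \eqref{eq:recursion mu} of Proposition~\ref{prop:recursion mu}. The entire content is the convolution theorem, together with the bookkeeping of how a polynomial evaluated at a Dirac measure behaves under the transform. I first fix the normalisation of Fourier coefficients on $\mathbb{T}$ so that $\widehat{\delta_x}(t)=\ee^{-2\pi\ii t x}$ for $t\in\mathbb{Z}$; this is precisely the convention that produces the exponential $\ee^{-\frac{2\pi\ii t}{k^n(k-1)}}$ appearing on the right-hand side of \eqref{eq: recursion mu hat}.

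Next, since the Fourier transform intertwines convolution and pointwise multiplication, the defining relation $(\delta_x)^r=\delta_{rx}$ gives $\widehat{(\delta_x)^r}(t)=\ee^{-2\pi\ii t r x}=(\ee^{-2\pi\ii t x})^r$. Extending by linearity in $r$, for any polynomial $p\in\mathbb{C}[z]$ the measure $p(\delta_x)$ has Fourier coefficients $\widehat{p(\delta_x)}(t)=p(\ee^{-2\pi\ii t x})$. In other words, ``evaluate at $\delta_x$, then transform'' coincides with ``transform, then evaluate at $\ee^{-2\pi\ii t x}$'', which is exactly the interchange we need.

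I would then apply this observation entrywise. By \eqref{eq:A matrix}, the entries of ${\bf A}_n(z)$ are the scalar multiples $\varSigma_j(n-1)\,b_{ij}(z)/\varSigma_i(n)$ of the polynomials $b_{ij}(z)$, so each entry of the matrix of measures ${\bf A}_n\big(\delta_{1/k^n(k-1)}\big)$ transforms, at argument $t$, into the corresponding entry of ${\bf A}_n\big(\ee^{-\frac{2\pi\ii t}{k^n(k-1)}}\big)$. Taking the Fourier coefficient at $t$ of both sides of \eqref{eq:recursion mu}, using linearity of the transform and the fact that the matrix of measures acts on $\boldsymbol{\mu}_{n-1}$ by convolution (hence, after transforming, by ordinary matrix multiplication), then yields \eqref{eq: recursion mu hat} for every $t\in\mathbb{Z}$.

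I do not expect a genuine obstacle, consistent with the statement being flagged as ``an immediate consequence.'' The only points deserving care are matching the sign and normalisation conventions in the Fourier transform so that the exponent comes out exactly as written, and confirming that the matrix-of-measures product in \eqref{eq:recursion mu} is understood via convolution in each entry, so that the convolution theorem applies termwise.
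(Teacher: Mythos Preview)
Your argument is correct and is precisely the unpacking the paper has in mind when it calls the corollary ``an immediate consequence of Proposition~\ref{prop:recursion mu}'': apply the Fourier transform entrywise to \eqref{eq:recursion mu}, use $\widehat{\delta_x}(t)=\ee^{-2\pi\ii t x}$ together with the convolution theorem so that $p(\delta_x)$ transforms to $p(\ee^{-2\pi\ii t x})$, and read off the matrix identity. There is nothing to add.
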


\begin{remark}
While the convergents $\mu_n$ are pure point probability measures on $\mathbb{T}$, one can also consider the measures $\nu_n=\delta_\mathbb{Z}\ast \mu_n$, which are $\mathbb{Z}$-periodic measures in $\mathbb{R}$. Here, one has $\widehat{\nu}_n\colon \mathbb{R}\to \mathbb{C}$, with $\widehat{\mathbb{R}}=\mathbb{R}$. In the discussion below, we only carry out the analysis for $\mu_n$, but the (vague) convergence of the relevant matrix products also hold for $\nu_n$. \exend
\end{remark}

Using Equation~\eqref{eq:recursion mu}, one can construct the infinite matrix convolution and hope for the existence of the limit vector
\begin{equation}\label{eq: limit measure}
\boldsymbol{\mu}:=\bigg(\bigast_{n=1}^{\infty} {\bf A}_{n} \big(\delta_{1/k^n(k-1)}\big)\bigg)\,\boldsymbol{\mu}_0.
\end{equation}
The existence of the limit vector $\boldsymbol{\mu}$ depends on the compact convergence of the Fourier coefficients, which boils down to the compact convergence of the analytic matrix product 
\begin{equation*}
\prod_{n=1}^{\infty}  {\bf A}_n\big(\ee^{-\frac{2\pi \ii t}{k^n(k-1)}}\big), 
\end{equation*} a property we demonstrate below. 

We require the following lemma that the positivity of ${\bf B}$ implies the convergence of the quotients $\varSigma_i(n-1)/\varSigma_j(n)$ to a positive value. 

\begin{lemma}\label{lem: primitive quotients} Let $f$ be a primitive $k$-regular sequence.  Then for each $i,j\in\{1,\ldots,d\}$, the limit of the quotient $\varSigma_j(n-1)/\varSigma_i(n)$ exists and is non-zero.
\end{lemma}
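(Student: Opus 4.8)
The plan is to iterate the recursion of Lemma~\ref{lem:Sigma n} and then apply the Perron--Frobenius theorem to the positive matrix ${\bf B}$. First, unrolling Lemma~\ref{lem:Sigma n} gives ${\bf\Sigma}(n)={\bf B}^{n}\,{\bf\Sigma}(0)$ for all $n\geqslant 0$. Since $f$ is primitive, ${\bf B}$ is strictly positive, so by Perron--Frobenius it has a simple dominant eigenvalue $\lambda=\rho({\bf B})>0$, strictly exceeding the modulus of every other eigenvalue, together with strictly positive right and left eigenvectors $u$ and $v$, which I normalise so that $v^{T}u=1$. The associated spectral decomposition yields $\lambda^{-n}{\bf B}^{n}\to u\,v^{T}$ as $n\to\infty$, and hence $\lambda^{-n}{\bf\Sigma}(n)\to\big(v^{T}{\bf\Sigma}(0)\big)\,u=:c\,u$.

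The decisive point is to show that the scalar $c=v^{T}{\bf\Sigma}(0)$ is strictly positive, and this is where I expect the real work to lie. I would argue that ${\bf\Sigma}(0)$ is a non-negative, non-zero vector. Each basis sequence $f_i$ lies in ${\rm ker}_k(f)$ and is therefore a subsequence of $f$, hence non-negative because $f$ is; consequently every $\varSigma_i(0)=\sum_{m=1}^{k-1}f_i(m)\geqslant 0$. Moreover, if ${\bf\Sigma}(0)$ were the zero vector, then ${\bf\Sigma}(n)={\bf B}^{n}{\bf\Sigma}(0)=0$ for all $n$, which (using $f_1=f\geqslant 0$) forces $f$ to vanish on every block $[k^{n},k^{n+1})$ and so to be eventually zero, contradicting primitivity. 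Since $v>0$ strictly while ${\bf\Sigma}(0)\geqslant 0$ is non-zero, we conclude $c>0$. In passing this also shows ${\bf\Sigma}(1)={\bf B}\,{\bf\Sigma}(0)>0$ and hence $\varSigma_i(n)\neq 0$ for all $n\geqslant 1$, which recovers the non-vanishing asserted before \eqref{eq:A matrix}.

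With $c>0$ established, the conclusion follows at once: for each $i$ we have $\lambda^{-n}\varSigma_i(n)\to c\,u_i>0$, and therefore
\begin{equation*}
\frac{\varSigma_j(n-1)}{\varSigma_i(n)}
=\frac{\lambda^{-(n-1)}\varSigma_j(n-1)}{\lambda\,\lambda^{-n}\varSigma_i(n)}
\;\longrightarrow\;\frac{c\,u_j}{\lambda\,c\,u_i}=\frac{u_j}{\lambda\,u_i},
\end{equation*}
which exists and is non-zero because $u_i,u_j,\lambda>0$. The only genuine obstacle is thus the positivity of $c$, i.e.\ ensuring that the Perron mass of ${\bf\Sigma}(0)$ does not vanish; this is precisely where the non-negativity of the basis sequences and the hypothesis that $f$ is not eventually zero are used. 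The surrounding asymptotics are the standard consequences of primitivity via Perron--Frobenius.
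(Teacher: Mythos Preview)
Your proof is correct and follows essentially the same route as the paper: both arguments iterate Lemma~\ref{lem:Sigma n} and invoke Perron--Frobenius for the positive matrix ${\bf B}$ to obtain $\varSigma_i(n)\sim c_i\,\rho_{_{\rm PF}}({\bf B})^n$ with $c_i>0$, from which the limit of the quotients is immediate. The only cosmetic difference is in how the positivity of the leading constant is secured: the paper picks $j$ with ${\bf B}^{j}{\bf w}>{\bf v}$ to get a direct lower bound $\varSigma_i(n)\geqslant (e_i^T{\bf v}/\rho^{j})\rho^n$, whereas you argue that ${\bf\Sigma}(0)\geqslant 0$ is non-zero and pair it with the strictly positive left Perron eigenvector to get $c=v^{T}{\bf\Sigma}(0)>0$; both exploit exactly the same hypotheses (non-negativity of the basis sequences from $\{f_i\}\subseteq{\rm ker}_k(f)$ and that $f$ is not eventually zero).
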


\begin{proof} This follows from the fact that ${\bf B}$ is positive and that $f$ is non-negative (and not trivial), so that given an $i\in\{1,\ldots,d\}$ there is a constant $c_i>0$ such that \begin{equation}\label{cisigma}\varSigma_i(n)\sim c_i\cdot\rho_{_{\rm PF}}({\bf B})^n,\end{equation} as $n\to \infty$, where the positive real number $\rho_{_{\rm PF}}({\bf B})$ is the Perron--Frobenius eigenvalue of ${\bf B}$. To see this, let $\rho_{_{\rm PF}}({\bf B})$ be the Perron--Frobenius eigenvalue of ${\bf B}$ and let ${\bf v}$ be the corresponding positive Perron--Frobenius eigenvector. Let $j$ be a positive integer so that ${\bf B}^j {\bf w}$ is entry-wise greater than ${\bf v}$. Then for $n>j$, \begin{multline*}\varSigma_i(n)=e_i^T {\bf B}^n\, {\bf w}=e_i^T {\bf B}^{n-j}\, {\bf B}^{j}\, {\bf w}=e_i^T {\bf B}^{n-j}\, {\bf v}+e_i^T {\bf B}^{n-j}\,  ({\bf B}^{j}\,{\bf w}-{\bf v})\\ \geqslant e_i^T {\bf B}^{n-j}\, {\bf v}=e_i^T \rho_{_{\rm PF}}({\bf B})^{n-j}\, {\bf v}=(e_i^T{\bf v}/\rho_{_{\rm PF}}({\bf B})^j)\cdot \rho_{_{\rm PF}}({\bf B})^n.\end{multline*} Since $\rho_{_{\rm PF}}({\bf B})$ is a simple eigenvalue of ${\bf B}$ with maximal modulus, comparing the eigenvalue expansion of the linear recurrent sequence $\varSigma_i(n)$ with this inequality proves \eqref{cisigma}. The lemma follows immediately.
\end{proof}

For the proof of the following proposition, we adapt a technique used by Baake and Grimm in determining the intensities of Bragg peaks of Pisot substitutions via the internal Fourier cocycle \cite[Thm.~4.6]{BG2019}. 

\begin{proposition}\label{prop:conv general}
Let $f$ be a primitive $k$-regular sequence. Then the matrix product $$\prod_{n=1}^{\infty} {\bf A}_n\big(\ee^{-\frac{2\pi \ii t}{k^n(k-1)}}\big),$$ as a function of $t$, converges uniformly on $\mathbb{T}$ and compactly on $\mathbb{R}$.  
\end{proposition}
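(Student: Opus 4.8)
The plan is to establish convergence of the partial products $P_N(t):={\bf A}_N(z_N)\cdots{\bf A}_1(z_1)$, where I abbreviate $z_n=z_n(t):=\ee^{-2\pi\ii t/k^n(k-1)}$ and write $\rho:=\rho_{_{\rm PF}}({\bf B})$, by combining a contraction estimate with a summable-perturbation argument against a reference product that converges by Perron--Frobenius theory. The starting point is that, setting $D_n:=\mathrm{diag}(\varSigma_1(n),\dots,\varSigma_d(n))$, the definition \eqref{eq:A matrix} is exactly the diagonal conjugation ${\bf A}_n(z)=D_n^{-1}{\bf B}(z)\,D_{n-1}$. Hence the normalisations telescope and
\[
P_N(t)=D_N^{-1}\,{\bf B}(z_N)\,{\bf B}(z_{N-1})\cdots{\bf B}(z_1)\,D_0,
\]
which cleanly separates the growth carried by $D_N^{-1}{\bf B}^N D_0$ from the analytic perturbation incurred by replacing each ${\bf B}(1)={\bf B}$ by ${\bf B}(z_n)$.

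Two uniform estimates, both valid for $t$ in a fixed compact $K\subseteq\mathbb{R}$, drive the argument. First, a \emph{contraction bound}: since every ${\bf B}_a$ is non-negative and $|z_n|=1$, one has $|b_{ij}(z_n)|\leqslant b_{ij}(1)=({\bf B})_{ij}$ entrywise; because ${\bf A}_n(1)$ is a Markov matrix by the corollary to Lemma~\ref{lem:Sigma n}, this forces $\|{\bf A}_n(z_n)\|_\infty\leqslant\|{\bf A}_n(1)\|_\infty=1$ in the maximum-row-sum norm, whence $\|P_N(t)\|_\infty\leqslant1$ for all $N$. Second, a \emph{summable-perturbation bound}: from $|z_n-1|\leqslant 2\pi|t|/(k^n(k-1))$ together with $|z^a-1|\leqslant a|z-1|$ for $|z|=1$, one obtains $|b_{ij}(z_n)-b_{ij}(1)|\leqslant C_K k^{-n}$, and since the ratios $\varSigma_j(n-1)/\varSigma_i(n)$ are bounded by Lemma~\ref{lem: primitive quotients}, the error matrices $E_n(t):={\bf A}_n(z_n)-{\bf A}_n(1)$ satisfy $\|E_n(t)\|_\infty\leqslant C_K k^{-n}$ with $\sum_n C_K k^{-n}<\infty$, uniformly on $K$.

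Next I would analyse the reference product $Q_N:={\bf A}_N(1)\cdots{\bf A}_1(1)=D_N^{-1}{\bf B}^N D_0$. Because $f$ is primitive, ${\bf B}>0$ has a simple dominant eigenvalue $\rho$ with a spectral gap, so $\rho^{-N}{\bf B}^N\to{\bf v}{\bf u}^T$ geometrically; together with $\rho^{-N}\varSigma_i(N)\to c_i>0$ from \eqref{cisigma} and the eigenvalue expansion of the linear recurrence of Lemma~\ref{lem:Sigma n}, this shows $Q_N$ converges geometrically to a rank-one (stochastic) matrix $Q_\infty$. Writing $R_N:=P_N-Q_N$, the recursions $P_N={\bf A}_N(z_N)P_{N-1}$ and $Q_N={\bf A}_N(1)Q_{N-1}$ give $R_N={\bf A}_N(z_N)R_{N-1}+E_N(t)Q_{N-1}$, equivalently the telescoped sum $R_N=\sum_{m=1}^N\big({\bf A}_N(z_N)\cdots{\bf A}_{m+1}(z_{m+1})\big)E_m(t)\,Q_{m-1}$, whose individual terms are bounded by $C_K k^{-m}$ via the two estimates above.

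The main obstacle is that this telescoping does \emph{not} immediately yield a Cauchy sequence: the factors ${\bf A}_n(z_n)$ tend to Markov matrices, and products of stochastic matrices collapse to rank one rather than to the identity, so the naive bound $\|P_{N+p}-P_N\|_\infty\leqslant\|\text{tail product}-I\|_\infty$ is useless. To overcome this I would exploit the Perron gap, splitting each factor through the projection $\Pi={\bf v}{\bf u}^T$ and its complement $S$ (with $\|S^j\|\leqslant C\theta^j$, $\theta<1$): the subdominant components are annihilated geometrically, while along the one-dimensional dominant direction the perturbations accumulate only as a convergent scalar product $\prod_n\big(1+O(k^{-n})\big)$. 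This is exactly the point where I would follow the internal-cocycle technique of Baake and Grimm~\cite[Thm.~4.6]{BG2019}. Finally, since every constant above depends on $t$ only through $|t|$ and is uniform over the compact $K$ (and over $\mathbb{T}=[0,1)$), a Weierstrass $M$-test applied to the telescoped increments upgrades pointwise convergence to uniform convergence on $\mathbb{T}$ and compact convergence on $\mathbb{R}$, which is the assertion of the proposition.
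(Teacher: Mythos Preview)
Your proposal assembles the right ingredients---the row-sum contraction $\|{\bf A}_n(z_n)\|_\infty\leqslant 1$, the geometric perturbation bound $\|{\bf A}_n(z_n)-{\bf A}_n(1)\|_\infty=O(k^{-n})$ on compacta, and the Perron--Frobenius convergence of products of (near-)Markov matrices---and these are precisely the tools the paper uses as well. The organisation, however, differs. You first split $P_N=Q_N+R_N$ against the reference product $Q_N=D_N^{-1}{\bf B}^ND_0$ (the diagonal-telescoping identity ${\bf A}_n(z)=D_n^{-1}{\bf B}(z)D_{n-1}$ is a nice observation the paper does not exploit), establish $Q_N\to Q_\infty$ geometrically from the spectral gap of ${\bf B}$, and then propose to control the Cauchy differences of $R_N$ by projecting onto the Perron direction of ${\bf A}(1)$. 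The paper instead works directly with $P_N=\bS^{(N)}(t)$: it first proves equicontinuity of \emph{all} partial products $\bS^{(m,\ell)}$ at $t=0$ via exactly your telescoping-plus-contraction estimate, and then shows $\bS^{(N)}(t)$ is uniformly Cauchy by sandwiching $\bS^{(p+q+r)}(t)-\bS^{(p+q)}(t)$ through the corresponding products at $t=0$ (small by equicontinuity once $p$ is large), and those in turn through powers of ${\bf A}(1)$ (small because $\bS_\ell(0)\to{\bf A}(1)$ and ${\bf A}(1)^n$ converges to its steady state).

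What this buys the paper is a fully self-contained elementary argument: no explicit spectral decomposition is ever needed, only five triangle inequalities and the convergence of positive-Markov-matrix powers. Your route is conceptually transparent, but the decisive step---controlling the non-commuting perturbations across the Perron projection when the factors ${\bf A}_n(z_n)$ do not share the eigenspaces of ${\bf A}(1)$---is only sketched and effectively deferred to \cite{BG2019}. That step can indeed be made to work, but it is exactly where the labour lies; the paper's equicontinuity manoeuvre sidesteps it entirely.
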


\begin{proof} 
It follows from the convergence of the quotients that ${\bf A}_n(z)\to {\bf A}(z)$ as $n\to\infty$, with $({\bf A}(z))_{ij}=c_{ij}b_{ij}(z)$. In particular, one has ${\bf A}_n(1)\to {\bf A}(1)$, where every ${\bf A}_n(1)$ is a Markov matrix. 
Setting ${\bf S}_{n}(t):={\bf A}_n\big(\ee^{\frac{-2\pi it}{(k-1)}}\big)$, one gets the convergence 
$\bS_n(t)\to \bS(t):=\bA(\ee^{\frac{-2\pi it}{(k-1)}})$.  
One has
\begin{equation}\label{AmSm}{\bf A}^{(m)}\big(\ee^{\frac{-2\pi it}{(k-1)}}\big)={\bf S}^{(m)}(t):={\bf S}_m\bigg(\frac{t}{k^m}\bigg)\, {\bf S}_{m-1}\bigg(\frac{t}{k^{m-1}}\bigg)\cdots\, {\bf S}_1\bigg(\frac{t}{k}\bigg).\end{equation}
Note that ${\bf S}^{(m)}(0)={\bf S}_m(0)\, {\bf S}_{m-1}(0)\cdots{\bf S}_1(0)$ is also a Markov matrix. 

For $0\leqslant r\leqslant \ell$ define the product
\[
\bS^{(\ell,r)}(t):=\bS_{\ell}\bigg(
\frac{t}{k^\ell}\bigg)\cdots\, \bS_{r}\bigg(
\frac{t}{k^r}\bigg).
\]
These products satisfy the identity
$\bS^{(\ell,r)}(t)=\bS^{(\ell,c)}(t)\, \bS^{(c,r)}(t)$, 
for $0\leqslant r\leqslant c\leqslant \ell$.
Note that $\bS^{(\ell,0)}(t)=\bS^{(\ell)}(t)$. Moreover,   
$\bS^{(\ell,r)}(0)$ is a Markov matrix for any such $\ell,r$, being a finite product of Markov matrices. 
The first goal is to show that ${\bf S}^{(m,\ell)}(t)$ is equicontinuous at $t=0$ for all $\ell$ satisfying $0\leqslant\ell\leqslant m$. 

Employing a telescoping argument, one obtains the equality
\[
\bS^{(m,\ell)}(t)-\bS^{(m,\ell)}(0)=
\sum_{j=\ell-1}^{m-1} \left(\bS^{(m,j+2)}(0)\, \bS^{(j+1,\ell)}(t)-
\bS^{(m,j+1)}(0)\, \bS^{(j,\ell)}(t)\right)
\]
This implies the following 
\begin{align*}
\|\bS^{(m,\ell)}(t)&-\bS^{(m,\ell)}(0)\|_{\infty}\\ &\leqslant \sum_{j=0}^{m-1}
\|\bS^{(m,j+2)}(0)\,\bS^{(j+1,\ell)}(t)-
\bS^{(m,j+1)}(0)\, \bS^{(j,\ell)}(t)\|_{\infty}\\
&\leqslant \sum_{j=0}^{m-1} \|\bS^{(m,j+2)}(0)\|_{\infty}\cdot
\|\bS^{(j+1,\ell)}(t)-\bS_{j+1}(0)\, \bS^{(j,\ell)}(t) \|_{\infty}\\
&\leqslant \sum_{j=0}^{m-1} \|\bS^{(m,j+2)}(0)\|_{\infty}\cdot
\|\bS_{j+1}(\tfrac{t}{k^{j+1}})-\bS_{j+1}(0)\|_{\infty}\cdot \|\bS^{(j,\ell)}(t)\|_{\infty}\\
&\leqslant \sum_{j=0}^{m-1} 
\|\bS_{j+1}(\tfrac{t}{k^{j+1}})-\bS_{j+1}(0)\|_{\infty},
\end{align*}
where the last step follows from the properties $\|\bS^{(m,j+2)}(0)\|_{\infty}=1$ and $$\|\bS^{(j,\ell)}(t)\|_{\infty}\leqslant \|\bS^{(j,\ell)}(0)\|_{\infty}=1,$$ since the matrices ${\bf B}_a$ are non-negative and since both $\bS^{(j,\ell)}(t)$ and $\bS^{(j,\ell)}(0)$ are Markov. 

Now, let $\varepsilon>0$ be given and choose $\delta>0$ such that
\[\|\bS_{j+1}(\tfrac{t}{k^{j+1}})-\bS_{j+1}(0)\|_{\infty}<\frac{\varepsilon}{k^{j+1}}\]
holds whenever $|\tfrac{t}{k^{j+1}}|<\delta$. This yields
\begin{equation}\label{eq:equicont general}
\|\bS^{(m,\ell)}(t)-\bS^{(m,\ell)}(0)\|_{\infty} 
 \leqslant \sum_{j=0}^{m-1} 
\|\bS_{j+1}(\tfrac{t}{k^{j+1}})-\bS_{j+1}(0)\|_{\infty}
< \sum_{j=0}^{\infty} \frac{\varepsilon}{k^{j}} 
=\varepsilon\big(\frac{k}{k-1}\big)
\end{equation} 
which proves that $\bS^{(m,\ell)}(t)$ is equicontinuous at $t=0$.

To show compact convergence, we prove that $\bS^{(m)}(t)=\bS^{(m,0)}(t)$ is uniformly Cauchy for any given compact set $K\subseteq \mathbb{Z}_{>0}$. To this end, choose $p$ such that $|\tfrac{t}{k^p}|<\delta$ for all $t\in K$, which implies $|\tfrac{t}{k^j}|<\delta$ for all $j\geqslant p$ and $t\in K$.  One then has
\begin{align*}
\|\bS^{(p+q+r)}(t)-\bS^{(p+q)}(t)\|_{\infty} & \leqslant 
\|\bS^{(p+q+r,p+1)}(t)-\bS^{(p+q,p+1)}(t)\|_{\infty}\cdot\|\bS^{(p)}(t)\|_{\infty}\\
& \leqslant \|\bS^{(p+q+r,p+1)}(t)-\bS^{(p+q,p+1)}(t)\|_{\infty},
\end{align*}
since $\|\bS^{(p)}(t)\|\leqslant 1$. Thus, using the triangle inequality, we obtain
\begin{multline*}
\|\bS^{(p+q+r,p+1)}(t)-\bS^{(p+q,p+1)}(t)\|_{\infty}
\leqslant\|\bS^{(p+q+r,p+1)}(t)-\bS^{(p+q+r,p+1)}(0)\|_{\infty}\\
+
\|\bS^{(p+q+r,p+1)}(0)-\bS^{(p+q,p+1)}(0)\|_{\infty}+
\|\bS^{(p+q,p+1)}(t)-\bS^{(p+q,p+1)}(0)\|_{\infty}
\end{multline*}
where the first and the third summands are strictly less than $\varepsilon\cdot k/(k-1)$
by invoking Eq.~\eqref{eq:equicont general} for large enough $p$. 
The second summand splits further, into 
\begin{multline}
\|\bS^{(p+q+r,p+1)}(0)-\bS^{(p+q,p+1)}(0)\|_{\infty}\leqslant
\|\bS^{(p+q+r,p+1)}(0)-\bA(1)^{q+r}\|_{\infty}\\+\|\bA(1)^{q+r}-\bA(1)^{q}\|_{\infty}+\|\bS^{(p+q,p+1)}(0)-\bA(1)^q\|_{\infty}, \label{eq: sum S(0)}
\end{multline}
Since $\bS_{\ell}(0)$ converges to $\bA(1)$, one can choose $p^{\prime}$ large enough such that
\[\|\bS_{\ell}(0)-\bA(1)\|_{\infty}<\frac{\varepsilon}{k^\ell}\] for all $\ell\geqslant p^{\prime}$. One can then invoke a similar argument to show that the first and the third summands in Eq.~\eqref{eq: sum S(0)} are strictly less than $\varepsilon\cdot k/(k-1)$ whenever $p\geqslant p^{\prime}$. Finally, since $\bA(1)$ is Markov, $\bA(1)$ converges to a steady state matrix whence one can choose $q$ large enough such that 
the second summand is less than $\varepsilon$. 

Since $r$ is arbitrary, this means for all $m,n\geqslant \max\left\{p,p^{\prime}\right\}+q$ one has that
\[
\|\bS^{(m)}(t)-\bS^{(n)}(t)\|_{\infty}<\varepsilon\left(\frac{5k-1}{k-1}\right),
\]
and thus, $\bS^{(n)}(t)$ converges uniformly on $\mathbb{T}$ and compactly on $\mathbb{R}$.
\end{proof}

The following result is now an immediate corollary  of Proposition \ref{prop:conv general} by L\'evy's continuity theorem \cite[Thm.~3.14]{BFbook}.

\begin{theorem}\label{thm:muexists}
Let $f$ be a primitive $k$-regular sequence. Then the weak limit measure vector $\boldsymbol \mu$ in Eq.~\eqref{eq: limit measure} exists. 
\end{theorem}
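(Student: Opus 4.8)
The statement to prove is Theorem \ref{thm:muexists}: that the weak limit measure vector $\boldsymbol\mu$ in Eq.~\eqref{eq: limit measure} exists, and this is billed as an immediate corollary of Proposition \ref{prop:conv general} via L\'evy's continuity theorem.

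The plan is to pass from convergence of matrix products to convergence of measures through their Fourier coefficients. First I would recall the recursion of Corollary \ref{coro:recursion mu hat}, which gives $\widehat{\boldsymbol\mu_n}(t) = {\bf A}_n(\ee^{-2\pi\ii t/k^n(k-1)})\,\widehat{\boldsymbol\mu_{n-1}}(t)$ for all $t\in\mathbb{Z}$. Iterating this recursion down to the base case $n=0$ expresses $\widehat{\boldsymbol\mu_n}(t)$ as the finite matrix product $\prod_{j=1}^{n}{\bf A}_j(\ee^{-2\pi\ii t/k^j(k-1)})$ applied to $\widehat{\boldsymbol\mu_0}(t)$. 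Up to the reindexing $t\mapsto t$ built into the definition \eqref{AmSm} of ${\bf S}^{(m)}(t)$, this is exactly the product whose uniform-on-$\mathbb{T}$, compact-on-$\mathbb{R}$ convergence was established in Proposition \ref{prop:conv general}. Hence for each fixed $t$ the vector $\widehat{\boldsymbol\mu_n}(t)$ converges as $n\to\infty$, and the convergence is uniform in $t$ over $\mathbb{T}$.

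Next I would identify the limit as the Fourier transform of a genuine measure vector. Write $\widehat{\boldsymbol\mu}(t) := \lim_{n\to\infty}\widehat{\boldsymbol\mu_n}(t)$. Each $\boldsymbol\mu_n$ is a vector of probability measures, so each component $\widehat{\mu_{n,i}}$ satisfies $\widehat{\mu_{n,i}}(0)=1$ and $|\widehat{\mu_{n,i}}(t)|\leqslant 1$; these normalisation and boundedness properties survive in the limit because ${\bf A}_n(1)$ is Markov (the Corollary after Lemma \ref{lem:Sigma n}) and the limiting product at $t=0$ is again Markov, so $\widehat{\boldsymbol\mu}(0)$ is the all-ones vector. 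The pointwise limit $\widehat{\boldsymbol\mu}$ is continuous at $t=0$ because the convergence is uniform and each $\widehat{\boldsymbol\mu_n}$ is continuous there. L\'evy's continuity theorem \cite[Thm.~3.14]{BFbook} then applies componentwise: a sequence of probability measures whose characteristic functions converge pointwise to a limit that is continuous at the origin converges weakly to a probability measure whose characteristic function is that limit. Applying this to each of the $d$ components of $\boldsymbol\mu_n$ yields a vector $\boldsymbol\mu$ of probability measures on $\mathbb{T}$ with $\widehat{\boldsymbol\mu}=\lim_n\widehat{\boldsymbol\mu_n}$, which is precisely the weak limit asserted in \eqref{eq: limit measure}.

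The genuinely substantive content is already discharged by Proposition \ref{prop:conv general}, so within this corollary the only point requiring care is the continuity of the limit at the origin, which is exactly the hypothesis L\'evy's theorem needs and which the \emph{uniform} (not merely pointwise) convergence on $\mathbb{T}$ supplies for free. I would therefore expect no real obstacle here; the one thing to state cleanly is that the infinite convolution in \eqref{eq: limit measure} is defined precisely as the weak limit of its partial convolutions, so that ``the limit measure exists'' and ``the Fourier coefficients converge to a continuous-at-$0$ limit'' are two descriptions of the same fact, linked by L\'evy. If one wished to be fully careful, a remark that the torus setting makes the Fourier variable range over $\mathbb{Z}$ (so pointwise convergence on $\mathbb{Z}$ together with the steady-state behaviour encoded in the Markov structure is what feeds the continuity theorem) would round out the argument.
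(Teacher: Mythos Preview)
Your proposal is correct and follows exactly the route the paper takes: the paper's entire proof is the single sentence that the result ``is now an immediate corollary of Proposition~\ref{prop:conv general} by L\'evy's continuity theorem,'' and you have simply unpacked that sentence via Corollary~\ref{coro:recursion mu hat}. One small remark: since the dual of $\mathbb{T}$ is the discrete group $\mathbb{Z}$, continuity of the limiting characteristic function at $0$ is automatic, so the uniform convergence you invoke, while true, is not actually needed to feed L\'evy's theorem here---pointwise convergence on $\mathbb{Z}$ already suffices, as you yourself note in your closing caveat.
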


\begin{remark}
As stated previously, the non-negativity assumptions on the $k$ matrices ${\bf B}_a$ and the positivity (primitivity, initially) assumption on ${\bf B}$ are natural, especially if one views it as the corresponding analogue of the substitution matrix ${\bf M}_\varrho$ for shift spaces arising from a substitution $\varrho$. For the latter, the primitivity of ${\bf M}_\varrho$ implies that the hull $\mathbb{X}_{\varrho}$ defined by $\varrho$ is strictly ergodic, and hence the diffraction measure $\widehat{\gamma}_{w}$ is constant on $\mathbb{X}_{\varrho}$ given an arbitrary choice of weight function $w$; see Baake's and Grimm's monograph \cite{BGbook} for details and definitions. 
Alternatively, one can consider matrix Riesz products of measures, where the approximants are absolutely continuous with respect to Lebesgue measure, and weak convergence to the matrix of correlation measures is likewise guaranteed by primitivity; see Bufetov and Solomyak~\cite[Lem.~2.2]{BS2014} and Queff\'elec \cite[Thm.~8.1]{Qbook}. \exend
\end{remark}

In what follows, we recover an analogous uniqueness result for the basis $\left\{f_1,\ldots,f_d\right\}$ of the $\mathbb{Q}$-vector space $\mathcal{V}_k(f)$.  

\begin{proposition}\label{prop: uniqueness}
Let $f$ be a primitive $k$-regular sequence. Then $\boldsymbol{\mu}=(\mu_f,\ldots,\mu_f)^{T}$ where $\mu_f$ is a probability measure on $\mathbb{T}$. That is, for each $i\in\{1,\ldots,d\}$, the weak limit of $\mu_{n,i}$ is $\mu_f$.
\end{proposition}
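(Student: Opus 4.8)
The goal is to show that all $d$ components of the limit vector $\boldsymbol{\mu}$ coincide with a single probability measure $\mu_f$. The plan is to work at the level of Fourier coefficients and exploit the Markov/steady-state structure already established in the proof of Proposition~\ref{prop:conv general}. Recall from Corollary~\ref{coro:recursion mu hat} and Eq.~\eqref{AmSm} that, for $t\in\mathbb{Z}$, one has $\widehat{\boldsymbol{\mu}_m}(t)=\bS^{(m)}(t)\,\widehat{\boldsymbol{\mu}_0}(t)$, where $\bS^{(m)}(t)$ converges by Proposition~\ref{prop:conv general} to a limit matrix $\bS^{(\infty)}(t)$. The key observation is that $\widehat{\boldsymbol{\mu}_0}(t)$ is the all-ones vector scaled appropriately; indeed each $\mu_{0,i}$ is a single normalised Dirac mass (the region $[k^0,k^1)$ contributes one point), so that $\boldsymbol{\mu}_0=(\delta_0,\ldots,\delta_0)^T$ up to the normalisation, giving $\widehat{\boldsymbol{\mu}_0}(t)=\mathbf{1}$ for all $t$.

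First I would identify the limiting matrix $\bS^{(\infty)}(t)$. At $t=0$ the proof of Proposition~\ref{prop:conv general} already shows that the products $\bS^{(m)}(0)$ are Markov matrices converging, through the powers $\bA(1)^{q}$, to the steady-state matrix of the primitive Markov matrix $\bA(1)$. By Perron--Frobenius applied to the Markov matrix $\bA(1)$ (which is positive since ${\bf B}$ is positive and the quotients $c_{ij}$ are strictly positive by Lemma~\ref{lem: primitive quotients}), this steady-state limit is a rank-one matrix all of whose \emph{rows} are equal to the stationary distribution. Consequently $\bS^{(\infty)}(0)$ has identical columns, and applying it to the all-ones vector $\mathbf{1}$ produces a vector with all entries equal. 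This handles the value $\widehat{\mu_f}(0)=1$, confirming each component is a probability measure.

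Next I would extend this to general $t\in\mathbb{Z}$. The decisive structural fact is the telescoping factorisation $\bS^{(\infty)}(t)=\bS^{(\infty,p+1)}(t)\,\bS^{(p)}(t)$ together with the estimate, buried in the proof of Proposition~\ref{prop:conv general}, that the ``tail'' factor $\bS^{(m,p+1)}(t)$ is uniformly close to $\bS^{(m,p+1)}(0)$ for large $p$. Letting $m\to\infty$, the tail of the infinite product is governed entirely by its value at $0$, hence by the same rank-one steady-state matrix with equal columns. Therefore $\bS^{(\infty)}(t)\,\mathbf{1}$ again has all components equal, for every $t$, which is exactly the statement that $\widehat{\mu_{n,i}}(t)$ converges to a common limit $\widehat{\mu_f}(t)$ independent of $i$. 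Since the limits exist by Theorem~\ref{thm:muexists}, the weak limits $\mu_{n,i}\to\mu_f$ coincide across all $i$.

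The main obstacle, and the step I would write out most carefully, is justifying that the full infinite tail $\bS^{(\infty,p+1)}(t)$ inherits the equal-column structure of the steady state rather than just being approximately Markov. The equicontinuity estimate~\eqref{eq:equicont general} controls how far $\bS^{(m,\ell)}(t)$ strays from $\bS^{(m,\ell)}(0)$ uniformly in $m$, so one must pass this through the limit $m\to\infty$ and combine it with the convergence $\bS^{(m,p+1)}(0)\to\bA(1)^{\,\infty}$ to the rank-one steady state. I expect the cleanest route is to fix $t$, send $m\to\infty$ first to obtain $\bS^{(\infty,p+1)}(t)=\bS^{(\infty,p+1)}(0)+O(\varepsilon)$ uniformly in $p$, then send $p\to\infty$ so that $\bS^{(\infty,p+1)}(0)$ converges to the equal-column steady-state matrix; the product against the constant vector $\mathbf 1$ then yields a constant vector up to $O(\varepsilon)$, and letting $\varepsilon\to 0$ gives the exact equality of components.
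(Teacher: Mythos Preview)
Your overall strategy---showing that the infinite product $\bS^{(\infty)}(t)$ has range contained in $\mathrm{span}(\mathbf{1})$ via its tail structure---is sound and close in spirit to the paper's argument, but your execution contains a genuine error. You claim that $\boldsymbol{\mu}_0=(\delta_0,\ldots,\delta_0)^T$ because ``the region $[k^0,k^1)$ contributes one point''. This is false for $k>2$: the interval $[1,k)$ contains $k-1$ integers, so $\mu_{0,i}$ is supported on $k-1$ points with weights $f_i(1),\ldots,f_i(k-1)$ (normalised), and hence $\widehat{\mu_{0,i}}(t)$ genuinely depends on $i$ for $t\neq 0$. Your argument as written, which computes $\bS^{(\infty)}(t)\,\mathbf{1}$ and identifies this with $\widehat{\boldsymbol{\mu}}(t)$, therefore only covers $k=2$. (There is also a slip: the steady-state matrix $\mathbf{1}\pi^T$ has identical \emph{rows}, not identical columns.)

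The fix is already latent in your own sketch: your tail analysis actually yields the stronger conclusion that $\bS^{(\infty)}(t)$ has all rows equal (its range lies in $\mathrm{span}(\mathbf{1})$), so $\bS^{(\infty)}(t)\,\widehat{\boldsymbol{\mu}_0}(t)$ is a constant vector \emph{whatever} $\widehat{\boldsymbol{\mu}_0}(t)$ is. The paper obtains this much more directly. Writing ${\bf L}(t)=\bS^{(\infty)}(t)$ and using $\bS^{(m+1)}(t)=\bS_{m+1}(t/k^{m+1})\,\bS^{(m)}(t)$ together with $\bS_{m+1}(t/k^{m+1})\to\bA(1)$, one gets the invariance ${\bf A}(1)\,{\bf L}(t)={\bf L}(t)$ by passing to the limit; iterating and using ${\bf A}(1)^n\to{\bf P}_{\bf A}=\mathbf{1}\pi^T$ gives ${\bf L}(t)={\bf P}_{\bf A}{\bf L}(t)$, hence $\widehat{\boldsymbol{\mu}}(t)=c(t)\,\mathbf{1}$. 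This one-line invariance replaces your double limit in $(m,p)$ and the equicontinuity bookkeeping, and it never touches $\widehat{\boldsymbol{\mu}_0}$.
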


\begin{proof} For a fixed $t\in\mathbb{R}$, one has $\widehat{\boldsymbol{\mu}}(t)={\bf L}(t)\, \widehat{\boldsymbol{\mu}}_0(t)$, where $${\bf L}(t):=\prod_{n=1}^{\infty}  {\bf A}_n\big(\ee^{-\frac{2\pi \ii t}{k^n(k-1)}}\big).$$ From the convergence in Lemma~\ref{lem: primitive quotients}, ${\bf A}_{n}(1)\to {\bf A}(1)$ as $n\to \infty$, where ${\bf A}(1)$ is a primitive Markov matrix, where the primitivity of ${\bf A}(1)$ follows from the positivity of ${\bf B}$ and Lemma~\ref{lem: primitive quotients}.
This means ${\bf A}(1)^n$ converges to the rank-$1$ projector $ {\bf P}_{\bf A}$ corresponding to the eigenvector ${\bf 1}=(1,\ldots,1)^{T}$. One then gets the equality 
$${\bf A}(1)\, {\bf L}(t)={\bf L}(t)= {\bf P}_{\bf A}\, {\bf L}(t),$$ which implies $\widehat{\boldsymbol{\mu}}(t)={\bf P}_{\bf A}\, \widehat{\boldsymbol{\mu}}(t)=c(t)\, {\bf 1}$. 
This means that the limit measures $\mu_i$ have the same Fourier coefficients for all $t$, and hence must correspond to the same measure $\mu=\mu_f$ for all $i$ satisfying $1\leqslant i\leqslant d$.  
\end{proof}

Theorem~\ref{main1} follows by combining Theorem~\ref{thm:muexists} and Proposition \ref{prop: uniqueness}.

%%%%%%%%%%%%%%%%%%%%%%%
\section{A natural probability measure associated with $f$}\label{sec:f}
%%%%%%%%%%%%%%%%%%%%%%%

In this section we will prove Theorem \ref{main2}, via the general situation described at the end of the Introduction. To set up, we recall the following notation, now for only a single $k$-regular sequence $f$, $$\varSigma_f(n):=\sum_{m=k^n}^{k^{n+1}-1}f(m)$$ and $$
   \mu^{}_{f,n} \, := \, \frac{1}{\varSigma_f(n)} \sum_{m=0}^{k^{n+1}-k^n - 1}
   f(k^n + m) \, \delta^{}_{m / k^n(k-1)},
$$
where $\delta_x$ denotes the unit Dirac measure at $x$. Also as previously, let $f$ be defined by the matrices ${\bf B}_0,\ldots,{\bf B}_{k-1}$ and the vector ${\bf w}\in\mathbb{Z}^{d\times 1}$ such that $f(m)=e_1^T {\bf B}_{(m)_k} {\bf w}$, where $(m)_k=i_s\cdots i_1i_0$ is the base-$k$ expansion of $m$ and ${\bf B}_{(m)_k}:={\bf B}_{i_0}{\bf B}_{i_1}\cdots {\bf B}_{i_s}.$ As before, set ${\bf B}:=\sum_{a=0}^{k-1}{\bf B}_a.$ Let $\rho({\bf M})$ denote the spectral radius of the matrix ${\bf M}$ and denote the {\em joint spectral radius} of a finite set of matrices $\{{\bf M}_1,{\bf M}_2,\ldots, {\bf M}_{\ell}\}$, by the real number $$\rho^*(\{{\bf M}_1,{\bf M}_2,\ldots, {\bf M}_{\ell}\})=\limsup_{n\to\infty}\max_{1\leqslant i_0,i_1,\ldots,i_{n-1}\leqslant \ell}\left\| {\bf M}_{i_0}{\bf M}_{i_1}\cdots{\bf M}_{i_{n-1}}\right\|^{1/n},$$
where $\|\cdot\|$ is any (submultiplicative) matrix norm. This quantity was introduced by Rota and Strang \cite{RS1960} and has a wide range of applications. For an extensive treatment, see Jungers's monograph \cite{J2009}.

Unlike Theorem \ref{main1}, so also unlike the previous section, we do not yet assume that $f$ is non-negative, nor that ${\bf B}$ is positive. However, to avoid degeneracies (discussed in the next section), we assume that the spectral radius $\rho({\bf B})$ is the unique dominant eigenvalue of ${\bf B}$, that $$\rho:=\rho({\bf B})>\rho^*(\{{\bf B}_0,\ldots,{\bf B}_{k-1}\})=:\rho^*,$$ that for $n$ large enough $\varSigma_f(n)\neq 0$ and that the asymptotical behaviour of $\varSigma_f(n)$ is determined by $\rho({\bf B})$. This last assumption will be highlighted and made explicit below. 

To exploit the asymptotical behaviour of $\varSigma_f(n)$, we use a result of Dumas \cite[Thm.~3]{D2013} on the asymptotic nature of the partial sums $\sum_{m\leqslant x}f(m)$. Throughout this paper, we have used the convention that ${\bf B}_{(m)_k}:={\bf B}_{i_0}{\bf B}_{i_1}\cdots {\bf B}_{i_s}$, where $(m)_k=i_s\cdots i_1i_0$ is the base-$k$ expansion of $m$, however, the result of Dumas \cite{D2013} that we use here requires multiplying in the opposite order. These two representations are related via matrix transposition; $$f(m)=e_1^T {\bf B}_{(m)_k} {\bf w}={\bf w}^T {\bf B}^{T}_{(m)_k} e_{1}.$$ To set up Dumas' result, we require some further notation. Let ${\bf J}_\rho=\rho{\bf I}_v+{\bf Z}$ be the Jordan block associated with $\rho$ from the Jordan normal form of ${\bf B}^{T}$, where ${\bf I}_v$ is the $v\times v$ identity matrix and ${\bf Z}$ is the nilpotent matrix of index $v$ with ones on the superdiagonal and zeros elsewhere.  Let ${\bf V}_\rho$ be the $d\times v$ matrix whose columns are the elements associated with $\rho$ from some Jordan basis of ${\bf B}^{T}$. Let the vector ${\bf e}_\rho$ be such that ${\bf V}_\rho {\bf e}_\rho$ equals the component of $e_{1}$ in the invariant subspace of ${\bf B}$ associated with $\rho$. Finally, we define the matrix-valued function ${\bf F}_\rho:\mathbb{R}\to\mathbb{C}^{d\times v}$ by \begin{equation}\label{dilation}{\bf F}_{\rho}(x) \cdot {\bf J}_{\rho} = \sum_{a=0}^{k-1}{\bf B}_{a}^{T} \cdot {\bf F}_{\rho}(k x-a),\end{equation} with the boundary conditions $${\bf F}_{\rho}(x)=\begin{cases}0 & \text{ for } x \leqslant 0\\ {\bf V}_{\rho} &\text{ for } x \geqslant 1.\end{cases}$$ The function ${\bf F}_\rho$ exists and is unique since $\rho>\rho^*$. Moreover, the function ${\bf F}_\rho$ is H\"older continuous with exponent $\alpha$ for any $\alpha<\log_k(\rho/\rho^*).$ Functional equations such as \eqref{dilation} are known as \emph{dilation equations} or \emph{two-scale difference equations} in the literature; seminal work on these was done by Daubechies and Lagarias \cite{DL1991,DL1992}. See also Micchelli and Prautzsch \cite{MP1989}. A point of interest for our context is that both the above-mentioned papers of Daubechies and Lagarias as well as the seminal paper of Allouche and Shallit \cite{AS1992} introducing $k$-regular sequences were published within the span of one year. Twenty years later, Dumas \cite{D2013}---extending ideas of Coquet \cite{C1983}---connected these ideas by showing explicitly how one can use a dilation equation to determine the  asymptotic growth of the partial sums of a regular sequence. We record his result here in the special case fit for our purpose.

\begin{theorem}[Dumas]\label{thm:Dumas} Suppose that the spectral radius $\rho=\rho({\bf B})$ is the unique dominant eigenvalue of ${\bf B}$ and that $\rho({\bf B})>\rho^*(\{{\bf B}_0,\ldots,{\bf B}_{k-1}\})$. Then $$\sum_{m\leqslant x}f(m) = {\bf w}^T\, {\bf E}_\rho(\log_k(x))\, {\bf e}_\rho + O(x^{\log_k (r)}),$$ where $r$ is any positive number strictly between $\rho$ and the modulus of the next-largest eigenvalue of ${\bf B}$ and \begin{multline*}{\bf E}_\rho(x):=({\bf I}_d-{\bf B}_{0}^{T})\, {\bf V}_\rho\, ({\bf I}_v-{\bf J}_\rho)^{-1}\\+\big(-({\bf I}_d-{\bf B}_{0}^{T})\, {\bf V}_\rho\, ({\bf I}_v-{\bf J}_\rho)^{-1}+{\bf F}_\rho(k^{\{ x\}-1})\big)\cdot\rho^{\lfloor x\rfloor+1}\cdot {\bf P}_\rho(\lfloor x\rfloor),\end{multline*} where ${\bf P}_\rho(\lfloor x\rfloor)=({\bf I}_v+(1/\rho){\bf Z})^{\lfloor x\rfloor}$. \end{theorem}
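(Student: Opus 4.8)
The plan is to analyse the vector-valued summatory function ${\bf S}(x):=\sum_{0\leqslant m\leqslant x}{\bf f}(m)$ and to recover the scalar sum at the very end via $\sum_{m\leqslant x}f(m)=e_1^T{\bf S}(x)$. The driving mechanism is the kernel relation \eqref{eq:k kernel}: grouping the integers $m\leqslant x$ by their last base-$k$ digit and writing $m=km'+a$ turns the sum into $ {\bf S}(x)=\sum_{a=0}^{k-1}{\bf B}_a\,{\bf S}\big((x-a)/k\big)$, a discrete counterpart of the dilation equation \eqref{dilation}. First I would iterate this identity $n:=\lfloor\log_k x\rfloor$ times, peeling off one digit of $x$ at each step. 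The terms coming from \emph{complete} blocks collapse to ${\bf B}^{\,n}{\bf w}$, exactly as in the computation behind Lemma~\ref{lem:Sigma n}, while the incomplete tail at each scale contributes products of \emph{individual} digit matrices ${\bf B}_{i_0}\cdots{\bf B}_{i_j}$ applied to an $[1,k)$-remainder. This separates a rigid part, governed by powers of ${\bf B}$, from a fluctuating part, governed by arbitrary products of the ${\bf B}_a$.

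Second, I would carry out the spectral splitting dictated by the hypothesis that $\rho=\rho({\bf B})$ is the unique dominant eigenvalue. Decompose the ambient space into the generalised $\rho$-eigenspace, on which ${\bf B}$ acts through the Jordan block ${\bf J}_\rho$ in the basis given by the columns of ${\bf V}_\rho$, and the complementary ${\bf B}$-invariant subspace, whose spectrum lies strictly inside the disk of radius $\rho$. Writing ${\bf w}={\bf V}_\rho{\bf w}_\rho+(\text{complement})$ and tracking the two pieces through the iterated identity, the complementary piece produces summatory contributions of order $x^{\log_k|\lambda_2|}$, where $\lambda_2$ is the next-largest eigenvalue of ${\bf B}$; choosing $r$ strictly between $\rho$ and $|\lambda_2|$ absorbs all of these, together with the discretisation error analysed below, into the claimed $O(x^{\log_k r})$.

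Third, I would identify the normalised fluctuation profile with ${\bf F}_\rho$. Rescaling the $\rho$-component of ${\bf S}(x)$ by $\rho^{-n}{\bf J}_\rho^{-n}$ and letting $n\to\infty$, the self-consistency forced by the iterated identity is precisely \eqref{dilation}, the Jordan block ${\bf J}_\rho$ appearing on the left because advancing one scale multiplies by $\rho$ up to nilpotent corrections. Here I would invoke the Daubechies--Lagarias theory of two-scale difference equations \cite{DL1991,DL1992}: since $\rho>\rho^*$, equation \eqref{dilation} with the stated boundary data has a unique solution ${\bf F}_\rho$, H\"older continuous of any exponent below $\log_k(\rho/\rho^*)$. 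Evaluating this profile at the fractional position $k^{\{\log_k x\}-1}=x/k^{n+1}$ and summing the geometric series produced by the rigid part yields the constant block $({\bf I}_d-{\bf B}_0){\bf V}_\rho({\bf I}_v-{\bf J}_\rho)^{-1}$ (using that $\rho>1$, so ${\bf I}_v-{\bf J}_\rho$ is invertible) together with the oscillating block carrying the growth factor $\rho^{\lfloor x\rfloor+1}{\bf P}_\rho(\lfloor x\rfloor)=\rho\,{\bf J}_\rho^{\lfloor x\rfloor}$; assembling these is exactly ${\bf E}_\rho$, and contracting with $e_1^T$ on the left and the $\rho$-coordinate vector ${\bf M}_\rho$ of ${\bf w}$ on the right gives the stated formula.

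The main obstacle is the third step: showing that the discretised profiles converge to the continuous solution ${\bf F}_\rho$ uniformly in the fractional position \emph{with a power-law rate}, so that the discretisation error merges cleanly into $O(x^{\log_k r})$. This is where the gap $\rho>\rho^*$ does the essential work — the H\"older exponent $\log_k(\rho/\rho^*)$ is the modulus of continuity that converts the combinatorial fluctuation of the products ${\bf B}_{i_0}\cdots{\bf B}_{i_j}$ into a genuine main-term contribution rather than an uncontrolled remainder. Care is also needed to keep the polynomial-in-$n$ corrections coming from ${\bf Z}$ aligned across scales, and to check that the two error mechanisms, subdominant eigenvalues of ${\bf B}$ and profile discretisation, are simultaneously dominated by a single exponent $r<\rho$.
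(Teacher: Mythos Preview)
The paper does not prove Theorem~\ref{thm:Dumas}. It is quoted from Dumas \cite[Thm.~3]{D2013} as an external input (``We record his result here in the special case fit for our purpose''), and the paper immediately proceeds to \emph{apply} it in the derivation of \eqref{eq:denom}--\eqref{sigfnl} and in the proof of Theorem~\ref{main2explicit}. There is therefore no proof in the paper against which to compare your proposal.

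That said, your outline is a faithful sketch of the strategy Dumas actually uses: iterate the digit recursion to obtain a self-similar relation for the summatory vector, split off the dominant Jordan block of ${\bf B}$, and identify the rescaled limit with the unique solution of the dilation equation \eqref{dilation} via the Daubechies--Lagarias machinery, with the gap $\rho>\rho^*$ supplying both uniqueness and the H\"older rate that controls the discretisation error. The points you flag as delicate---uniform power-law convergence of the discrete profiles to ${\bf F}_\rho$ and bookkeeping of the nilpotent part ${\bf Z}$ across scales---are indeed where the work lies in \cite{D2013}, but none of this is reproduced in the present paper.
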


A point to be made here is that, while Theorem \ref{thm:Dumas} is certainly technical, in the cases we apply it, for integers $x\in[k^n, k^{n+1})$, the integer part $\lfloor x\rfloor=n$ is constant, so only the dependence on the fractional parts $\{x\}$ will need to be dealt with. For a detailed example of how this theorem  can be applied to give a distribution function, see Baake and Coons~\cite[Sec.~3]{BC2018}, where they give an account concerning the Stern sequence. 

Now, applying Theorem \ref{thm:Dumas} to the complete sums $\varSigma_f(n)$ and using the transposed representation as that theorem requires, gives $$\varSigma_f(n)=\sum_{m=k^n}^{k^{n+1}-1}f(m)
=\rho^{n+1} {\bf w}^T \left({\bf F}_\rho\bigg(\frac{k-1/k^n}{k}\bigg)-{\bf F}_\rho\bigg(\frac{1}{k}\bigg)\right){\bf P}_\rho(n)\, {\bf e}_\rho+O(r^n),$$ for some $r<\rho$. As $n\to\infty$, since ${\bf F}_\rho$ is H\"older continuous, $${\bf F}_\rho\bigg(\frac{k-1/k^n}{k}\bigg)={\bf V}_\rho+o(1),$$ so that \begin{equation}\label{eq:denom}\varSigma_f(n)
=\rho^{n+1} {\bf w}^T \left({\bf V}_\rho-{\bf F}_\rho\bigg(\frac{1}{k}\bigg)\right){\bf P}_\rho(n)\, {\bf e}_\rho+o(\rho^n).\end{equation}
For $o(\rho^n)$ to be the true error term in Eq.~\eqref{eq:denom}, for sufficiently large $n$ we must have that \begin{equation}\label{condition} {\bf w}^T \left({\bf V}_\rho-{\bf F}_\rho\bigg(\frac{1}{k}\bigg)\right){\bf P}_\rho(n)\, {\bf e}_\rho \neq 0.\end{equation} Since $${\bf P}_\rho(n)=({\bf I}_v+(1/\rho){\bf Z})^{n}=\sum_{j=0}^{v-1}{n\choose j}\rho^{j}{\bf Z}^j,$$ the non-zero condition \eqref{condition} holds precisely when there is a $j\in\{0,\ldots,v-1\}$ such that \begin{equation}\label{zjcond} {\bf w}^T \left({\bf V}_\rho-{\bf F}_\rho\bigg(\frac{1}{k}\bigg)\right){\bf Z}^j\, {\bf e}_\rho \neq 0.\end{equation} This being the case, if $\ell$ is the largest $j\in\{0,\ldots,v\}$ for which \eqref{zjcond} holds, then \begin{equation}\label{sigfnl} \varSigma_f(n)
=\rho^{n-\ell+1}{n\choose \ell} {\bf w}^T \left({\bf V}_\rho-{\bf F}_\rho\bigg(\frac{1}{k}\bigg)\right){\bf Z}^\ell\, {\bf e}_\rho+o(\rho^nn^\ell).\end{equation}

Arguing as in the previous paragraph allows us to prove the following result. 

\begin{theorem}\label{main2explicit} Let $f$ be an integer-valued $k$-regular sequence. Suppose that $\rho({\bf B})$ is the unique dominant eigenvalue of ${\bf B}$, $\rho({\bf B})>\rho^*(\{{\bf B}_0,\ldots,{\bf B}_{k-1}\})$ and that Eq.~\eqref{zjcond} holds. Then, the limit $\mu_{f}([0,x])$ of the sequence $\mu_{f,n}([0,x])$ exists. Moreover, the function $\mu_{f}([0,x])$ is H\"older continuous with exponent $\alpha$ for any $\alpha<\log_k(\rho/\rho^*)$.
\end{theorem}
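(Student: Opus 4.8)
The plan is to mimic the renormalisation argument of Section~\ref{sec:V}, but now controlling the \emph{cumulative distribution function} $\mu_{f,n}([0,x])$ directly via Dumas' asymptotics rather than working with Fourier coefficients. The key observation is that the distribution function $\mu_{f,n}([0,x])$ is, up to the normalisation $\varSigma_f(n)$, a truncated partial sum of $f$ over the fundamental region $[k^n,k^{n+1})$. Concretely, for $x\in[0,1]$ one has
\begin{equation*}
\mu_{f,n}([0,x]) = \frac{1}{\varSigma_f(n)}\sum_{0\leqslant m/k^n(k-1)\leqslant x} f(k^n+m)
= \frac{1}{\varSigma_f(n)}\bigg(\sum_{m\leqslant k^n + k^n(k-1)x} f(m) - \sum_{m< k^n} f(m)\bigg),
\end{equation*}
so the numerator is a difference of two partial-sum evaluations to which Theorem~\ref{thm:Dumas} applies.

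Next I would feed the endpoints into Dumas' formula. Writing $y = k^n + k^n(k-1)x$, its base-$k$ logarithm is $\log_k(y) = n + \log_k(1+(k-1)x)$, so the integer part is $\lfloor\log_k y\rfloor = n$ (for $x<1$) and the fractional part is $\{x\}_{\!*} = \log_k(1+(k-1)x)$, which depends only on $x$ and not on $n$. This is exactly the simplification flagged after Theorem~\ref{thm:Dumas}. Plugging in ${\bf E}_\rho$ and extracting the leading $\rho^{n+1}$ factor, the numerator becomes
\begin{equation*}
\rho^{n+1}\, e_1^T\left({\bf F}_\rho\big(k^{\{x\}_{\!*}-1}\big)-{\bf F}_\rho\big(\tfrac1k\big)\right){\bf P}_\rho(n)\,{\bf M}_\rho + O(r^n),
\end{equation*}
with $k^{\{x\}_{\!*}-1} = (1+(k-1)x)/k$. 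Dividing by $\varSigma_f(n)$ from \eqref{sigfnl} and using that the $\rho^{n+1}{n\choose\ell}{\bf Z}^\ell$ term dominates ${\bf P}_\rho(n)$ (here the condition \eqref{zjcond} guarantees a nonvanishing denominator), the powers of $\rho$ and the binomial factors cancel, and the limit is
\begin{equation*}
\mu_f([0,x]) = \frac{e_1^T\left({\bf F}_\rho\big(\tfrac{1+(k-1)x}{k}\big)-{\bf F}_\rho\big(\tfrac1k\big)\right){\bf Z}^\ell\,{\bf M}_\rho}{e_1^T\left({\bf V}_\rho-{\bf F}_\rho\big(\tfrac1k\big)\right){\bf Z}^\ell\,{\bf M}_\rho}.
\end{equation*}
The H\"older regularity is then inherited directly: the numerator is an affine reparametrisation $x\mapsto (1+(k-1)x)/k$ composed with ${\bf F}_\rho$, which is H\"older with exponent any $\alpha<\log_k(\rho/\rho^*)$ by the cited regularity of solutions of the dilation equation \eqref{dilation}, and the denominator is a nonzero constant.

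The main obstacle I anticipate is the careful bookkeeping of the error terms and the ${\bf P}_\rho(n)$ matrix when $\ell>0$ (the non-semisimple case). One must verify that the $O(r^n)$ error from Dumas and the $o(\rho^n n^\ell)$ error in \eqref{sigfnl} really are negligible after division, and in particular that the \emph{same} power $\ell$ and the \emph{same} ${\bf Z}^\ell{\bf M}_\rho$ factor survive in both numerator and denominator so that the $n$-dependent binomials cancel cleanly in the limit. This requires checking that the largest $j$ for which the numerator's analogue of \eqref{zjcond} is nonzero coincides with the $\ell$ governing $\varSigma_f(n)$; since $\varSigma_f(n)$ is the value of the numerator at $x=1$ (where ${\bf F}_\rho(\tfrac{1+(k-1)}{k})={\bf F}_\rho(1)={\bf V}_\rho$), and since the $x$-dependence enters only through the ${\bf F}_\rho$ factor multiplying a fixed ${\bf Z}^j{\bf M}_\rho$, the same $\ell$ indeed dominates uniformly in $x$, which is the point that makes the ratio converge to a genuine (finite, nonzero-denominator) function rather than degenerating. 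A secondary subtlety is confirming that the pointwise limit of distribution functions, together with the fact that each $\mu_{f,n}$ is a probability measure and the limit is continuous, suffices to identify $\mu_f$ as the weak limit measure; continuity of the limiting distribution function rules out atoms and upgrades pointwise convergence to weak convergence.
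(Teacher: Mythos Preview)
Your proposal is correct and follows essentially the same route as the paper's proof: express $\mu_{f,n}([0,x])$ as a ratio of partial sums over the fundamental region, apply Dumas' expansion (Theorem~\ref{thm:Dumas}) to numerator and denominator, extract the dominant $\rho^{n-\ell+1}{n\choose\ell}{\bf Z}^\ell$ term in each using \eqref{zjcond}, and take the quotient, with H\"older continuity inherited from ${\bf F}_\rho$. One minor remark: your closing discussion of weak convergence of measures is not needed for Theorem~\ref{main2explicit}, which asserts only the pointwise limit and its regularity---that upgrade is precisely the content of the subsequent Theorem~\ref{BVexplicit}.
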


\begin{proof} Let $x\in\mathbb{T}$ and consider the sequence of functions $\mu_{f,n}([0,x])$. Then, applying the argument of the above paragraph with $(k-1/k^n)/k$ replaced by $1+(k-(1+1/k^n))x$, we have \begin{multline*}\varSigma_f(n)\cdot \mu_{f,n}([0,x])=\sum_{m=k^n}^{k^n(1+(k-(1+1/k^n))x)}f(m)\\
=\rho^{n+1} {\bf w}^T \left({\bf F}_\rho\bigg(\frac{1+(k-(1+1/k^n))x}{k}\bigg)\right.\\
\left.-{\bf F}_\rho\bigg(\frac{1}{k}\bigg)\right){\bf P}_\rho(n)\, {\bf e}_\rho+o(\rho^n),
\end{multline*} for some $r<\rho$. As before, using the H\"older continuity of ${\bf F}_\rho$ and letting $\ell$ be the largest value for which \eqref{zjcond} holds, we obtain that $\varSigma_f(n)\cdot \mu_{f,n}([0,x])$ equals $$
\rho^{n-\ell+1} {n\choose \ell}{\bf w}^T \left({\bf F}_\rho\bigg(\frac{1+(k-1)x}{k}\bigg)-{\bf F}_\rho\bigg(\frac{1}{k}\bigg)+o(1)\right){\bf Z}^\ell\, {\bf e}_\rho+o(\rho^n n^\ell).
$$ Using the asymptotic \eqref{sigfnl} yields \begin{equation*} \mu_{f,n}([0,x])=\frac{{\bf w}^T \left({\bf F}_\rho\big(\frac{1+(k-1)x}{k}\big)-{\bf F}_\rho\big(\frac{1}{k}\big)+o(1)\right){\bf Z}^\ell\, {\bf e}_\rho+o(1)}{{\bf w}^T \left({\bf V}_\rho-{\bf F}_\rho\big(\frac{1}{k}\big)\right){\bf Z}^\ell\, {\bf e}_\rho+o(1)}.\end{equation*} By our assumption that \eqref{zjcond} holds, the denominator limits to a nonzero value, and so the point-wise limit of $\mu_{f,n}([0,x])$ exists for all $x$; explicitly, \begin{equation*} \mu_{f}([0,x])=\lim_{n\to\infty}\mu_{f,n}([0,x])=\frac{{\bf w}^T \left({\bf F}_\rho\big(\frac{1+(k-1)x}{k}\big)-{\bf F}_\rho\big(\frac{1}{k}\big)\right){\bf Z}^\ell\, {\bf e}_\rho}{{\bf w}^T \left({\bf V}_\rho-{\bf F}_\rho\big(\frac{1}{k}\big)\right){\bf Z}^\ell\, {\bf e}_\rho}.\end{equation*} Finally, we note that the function $\mu_{f}([0,x])$ is H\"older continuous with exponent $\alpha$ for any $\alpha<\log_k(\rho/\rho^*)$, a property it inherits directly from ${\bf F}_\rho$. 
\end{proof}

While we have used the notation $\mu_{f}([0,x])$, the assumptions of Theorem \ref{main2explicit} are not strong enough to guarantee the existence of a measure $\mu_f$ for which $\mu_{f}([0,x])$ is a distribution function. The following result contains a sufficient condition and is a generalisation of the result stated in the Introduction as Theorem \ref{main2}. 

\begin{theorem}\label{BVexplicit} Let $f$ be an integer-valued $k$-regular sequence. Suppose that $\rho({\bf B})$ is the unique dominant eigenvalue of ${\bf B}$, $\rho({\bf B})>\rho^*(\{{\bf B}_0,\ldots,{\bf B}_{k-1}\})$ and that Eq.~\eqref{zjcond} holds. Suppose that $\mu_{f}([0,x])$, as provided by Theorem \ref{main2explicit}, is a function of bounded variation. Then $\mu_{f}([0,x])$ is the distribution function of a measure $\mu_f$, which is continuous with respect to Lebesgue measure.
\end{theorem}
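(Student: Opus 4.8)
The plan is to obtain the result as an essentially measure-theoretic corollary of Theorem~\ref{main2explicit}, which already supplies all of the analytic content; the bounded-variation hypothesis is precisely the ingredient that upgrades the limiting \emph{function} to the distribution function of a genuine \emph{measure}. First I would write $G(x):=\mu_f([0,x])$ for the explicit pointwise limit produced in Theorem~\ref{main2explicit}. By that theorem, $G$ is H\"older continuous with any exponent $\alpha<\log_k(\rho/\rho^*)$, and in particular continuous on $\mathbb{T}=[0,1)$; moreover $\mu_{f,n}([0,x])\to G(x)$ holds for \emph{every} $x$. The task is then to show that a continuous function of bounded variation is the distribution function of a measure having no atoms.

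The core step is the classical correspondence between functions of bounded variation and Lebesgue--Stieltjes signed measures. Using the Jordan decomposition I would write $G=G_+-G_-$ as a difference of two bounded, non-decreasing functions, taking $G_+$ to be the (normalised) total-variation function of $G$ and $G_-:=G_+-G$. Since $G$ is continuous, its variation function is continuous, so both $G_{\pm}$ are continuous and non-decreasing. Each such $G_{\pm}$ is the distribution function of a unique finite positive Borel measure $\mu_{\pm}$ determined by $\mu_{\pm}((a,b])=G_{\pm}(b)-G_{\pm}(a)$; setting $\mu_f:=\mu_+-\mu_-$ yields a finite signed Borel measure on $\mathbb{T}$ with $\mu_f([0,x])=G(x)$. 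This establishes the first assertion, that $G$ is the distribution function of a measure, and finiteness of the total variation of $\mu_f$ is exactly where the bounded-variation hypothesis is used.

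Continuity of $G$ then gives atomlessness directly: for every $x$ one has $\mu_f(\{x\})=G(x)-\lim_{y\uparrow x}G(y)=0$, so $\mu_f$ has trivial pure point part and is therefore a continuous measure. In the setting of Theorem~\ref{main2}, where $f$ is positive and each $\mu_{f,n}$ is a probability measure, one can moreover identify $\mu_f$ with the weak limit of the approximants: the distribution functions $\mu_{f,n}([0,x])$ converge to $G$ at every point, every point is a continuity point of the continuous limit, and $G$ is a bona fide distribution function with $G(0)=0$ and $G(1^-)=1$, so the Helly--Bray theorem yields $\mu_{f,n}\to\mu_f$ weakly. This confirms that the measure $\mu_f$ of Theorem~\ref{main2} exists and is continuous.

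The main obstacle is not a computation but the indispensability of the bounded-variation hypothesis. Without it the continuous limit $G$ of Theorem~\ref{main2explicit} need not have finite variation, the Lebesgue--Stieltjes construction breaks down, and $G$ is only a distribution-function-like object attached to no finite signed measure---the degenerate behaviour the theorem is designed to exclude. A secondary point requiring care is the torus geometry: one must verify that the identification $0\equiv 1$ in $\mathbb{T}$ introduces no atom at the boundary, which follows from the continuity of $G$ together with the normalisations $G(0)=0$ and $G(1^-)=\mu_f(\mathbb{T})$.
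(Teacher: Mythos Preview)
Your proposal is correct and follows essentially the same approach as the paper's proof: construct the Lebesgue--Stieltjes (signed) measure from the bounded-variation function $G$, invoke continuity of $G$ to conclude atomlessness, and then note that pointwise convergence of the distribution functions to a continuous limit yields weak convergence of the approximants. The only cosmetic difference is that you pass through the Jordan decomposition $G=G_{+}-G_{-}$ explicitly, whereas the paper simply cites the Riemann--Stieltjes construction; these are equivalent formulations of the same classical fact.
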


\begin{proof} We start with the function $\mu_{f}([0,x])$ provided by Theorem \ref{main2explicit}. Assuming that $\mu_{f}([0,x])$ is of bounded variation, we form the (possibly) signed measure $\mu_f$ via a Riemann--Stieltjes integral, assigning the value $$\int_a^b {\rm d} \mu_f([0,x]):=\mu_{f}([0,b])-\mu_{f}([0,a])$$ to any interval $(a,b]\subseteq\mathbb{T}$; see Lang \cite[Chp.~X]{Lang} for details on Riemann--Stieltjes integration and measure. This is a Borel measure and has distribution function $\mu_{f}([0,x])$. Since the distribution functions $\mu_{f,n}([0,x])$ of the measures $\mu_{f,n}$ are converging point-wise to the continuous distribution function $\mu_{f}([0,x])$ of the measure $\mu_{f}$, the measures $\mu_{f,n}$ are converging weakly to $\mu_{f}$. Moreover, the measure $\mu_{f}$ has no pure points since $\mu_{f}([0,x])$ is continuous, thus $\mu_{f}$ is continuous with respect to Lebesgue measure.
\end{proof}

This result gives Theorem \ref{main2} as a corollary.

\begin{proof}[Proof of Theorem \ref{main2}] This follows immediately from Theorem \ref{main2explicit} using the fact that the partial sums $\sum_{m\leqslant y}f(m)$ are  increasing with $y$ to show that $\mu_f([0,x])$ is increasing, so of bounded variation.
\end{proof}

We note that the H\"older exponent of the distribution function $\mu_f([0,x])$ is related to (possibly fractal) geometric properties of the measure $\mu_f$. To formalise this, let $\mu$ be a probability measure on $\mathbb{T}$. The \emph{lower local dimension} of $\mu$ at $x\in\mathbb{T}$ is given by 
\[
\underline{{\rm dim}}_{\rm loc} \, \mu(x)\colon=\liminf_{r\to 0^+} \frac{\log\mu(B_r(x))}{\log r}, 
\]
where as usual $B_r(x)$ denotes the ball of radius $r>0$ with centre $x$. One can also view $\underline{{\rm dim}}_{\rm loc} \, \mu(x)$ as the ``best possible local H\"older exponent,'' that is, 
\[
\underline{{\rm dim}}_{\rm loc} \, \mu(x)=\sup \left\{\alpha\geqslant 0\colon \mu\big(B_r(x)\big)=O(r^{\alpha})\ \mbox{as}\ r\to 0^+\right\}, 
\]
see Bufetov and Solomyak \cite[Sec.~3.2]{BS2018}, Baake et al.~\cite{BGKS2019} and Mattila, Mor\'an and Rey~\cite{MMR2000}. If $\mu$ is absolutely continuous, one has $\underline{{\rm dim}}_{\rm loc} \, \mu(x)=1$ for Lebesgue almost all $x$, whereas $\underline{{\rm dim}}_{\rm loc} \, \mu(x)=0$ for all $x$ satisfying $\mu(\left\{x\right\})\neq 0$ when $\mu$ is pure point. 

\begin{proposition}
Let $\mu$ be a probability measure on $\mathbb{T}$ with distribution function $F$. If $F$ is H\"older continuous with exponent $\alpha$, then $\underline{{\rm dim}}_{\rm loc} \, \mu(x)\geqslant \alpha$ for all $x\in\mathbb{T}$. 
\end{proposition}

\begin{proof} 
One has, assuming $r<1$,
$$
\underline{{\rm dim}}_{\rm loc} \, \mu(x) =\liminf_{r\to 0^+}  \frac{\log\mu(B_r(x))}{\log r}= \liminf_{r\to 0^+}  \frac{\log|F(x+\frac{r}{2})-F(x-\frac{r}{2})|}{\log r}\geqslant \alpha, 
$$
where the inequality follows from the H\"older continuity of $F$.
\end{proof}

We have the following immediate corollary for positive $k$-regular sequences. 

\begin{corollary}
Let $f$ be a $k$-regular sequence which satisfies the conditions of Theorem~\ref{main2} and let $\mu_f$ be the corresponding continuous measure. Then, one has 
\[
\underline{{\rm dim}}_{\rm loc} (\mu_f,x)\geqslant \log_{k}\big(\rho/\rho^{\ast}\big)
\]
uniformly on $\mathbb{T}$. 
\end{corollary}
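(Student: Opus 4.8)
The plan is to deduce this directly from the preceding Proposition, combined with the H\"older regularity already established. First I would observe that, since $f$ satisfies the hypotheses of Theorem~\ref{main2}, it in particular satisfies the hypotheses of Theorem~\ref{main2explicit}; hence the distribution function $F(x):=\mu_f([0,x])$ exists, $\mu_f$ is a genuine (continuous) probability measure with this distribution function, and $F$ is H\"older continuous with exponent $\alpha$ for every $\alpha<\log_k(\rho/\rho^{\ast})$. This is the only input from the earlier analysis that I need.

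Next, for each fixed such $\alpha$, I would apply the previous Proposition to the measure $\mu_f$ and its distribution function $F$. Its conclusion gives the pointwise lower bound $\underline{{\rm dim}}_{\rm loc}\,\mu_f(x)\geqslant\alpha$ holding simultaneously for all $x\in\mathbb{T}$. It is worth emphasising that the H\"older constant may blow up as $\alpha$ increases toward $\log_k(\rho/\rho^{\ast})$, but this causes no difficulty: for each fixed admissible $\alpha$ the Proposition already delivers a bound that is uniform in $x$, and no uniformity across the family of exponents is required.

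Finally, I would take the supremum over admissible exponents. Since $\underline{{\rm dim}}_{\rm loc}\,\mu_f(x)\geqslant\alpha$ holds for every $\alpha<\log_k(\rho/\rho^{\ast})$ and every $x\in\mathbb{T}$, letting $\alpha\to\log_k(\rho/\rho^{\ast})^{-}$ yields $\underline{{\rm dim}}_{\rm loc}\,\mu_f(x)\geqslant\log_k(\rho/\rho^{\ast})$, again uniformly on $\mathbb{T}$. I do not expect any serious obstacle here; the statement is genuinely an immediate corollary. The only point deserving a word of care is the passage from ``H\"older with every exponent strictly below the threshold'' to the sharp bound \emph{at} the threshold, and this is precisely the supremum step, which is harmless because the Proposition's lower bound is monotone in $\alpha$ and uniform in $x$.
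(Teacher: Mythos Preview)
Your proposal is correct and follows exactly the approach the paper intends: the corollary is stated there as an ``immediate consequence'' with no written proof, and your argument---invoking the H\"older regularity from Theorem~\ref{main2explicit} for every $\alpha<\log_k(\rho/\rho^*)$, applying the preceding Proposition, then taking the supremum over $\alpha$---is precisely how that immediacy is to be understood. The care you take with the supremum step is appropriate, since the H\"older exponent in Theorem~\ref{main2explicit} is strict.
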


An interesting example is the Stern sequence \cite{BC2018}. Here one has a H\"older exponent strictly less than one. In this case    $\underline{{\rm dim}}_{\rm loc}(\mu_s,x)\geqslant \log_2({3}/{\tau})\approx 0.890721$ for all $x\in\mathbb{T}$ where $\tau$ is the golden mean.

%%%%%%%%%%%%%%%%%%%%%%%
\section{Some comments on assumptions}\label{nonprim}
%%%%%%%%%%%%%%%%%%%%%%%

Theorem \ref{main1} relies on one main assumption, the primitivity of $f$, and Theorem~\ref{BVexplicit} relies on a two key assumptions. In this section, we consider these assumptions, highlighting their necessity with examples. 

The assumption of primitivity of the $k$-regular sequence $f$ in Theorem \ref{main1} implies that the matrix ${\bf B}$ is positive; recall, this positivity was gained by starting with a primitive matrix and considering $f$ as a $k^j$-regular sequence, where $j$ was the minimal positive integer such that ${\bf B}^j$ is positive. Sometimes such a choice is not immediately possible. For example, consider the Josephus sequence $J$, which is $2$-regular and determined by $J(0)=0$ and the recursions $J(2n)=2J(n)-1$ and $J(2n+1)=2J(n)+1$. These recursions imply that the sequences $J$ and ${\bf 1}$ (the constant sequence) form a basis for $\mathcal{V}_2(J)$. With this basis, we arrive at the linear representation $J(n)=e_1^T\, {\bf B}_{(n)_2} (0,1)^T,$ where $${\bf B}_0 =\begin{pmatrix}
2 &-1 \\
0 & 1
\end{pmatrix}\quad\mbox{and}\quad {\bf B}_1 =\begin{pmatrix}
2 &1 \\
0 & 1
\end{pmatrix},\quad\mbox{so that}\quad{\bf B}=\begin{pmatrix}
4 &0 \\
0 & 2
\end{pmatrix}.$$ In this case, ${\bf B}$ is neither positive nor primitive, so Theorem \ref{main1} cannot be applied. However, according to Theorem \ref{main2}, one can apply the same limiting measures $\mu_{n,i}$ via the results in Section \ref{sec:f}. In the limit, one gets $\mu_{J}=h(x)\cdot{\lambda}$ whereas $\mu_{\bf 1}={\lambda}$, where $\lambda$ is normalised Haar measure on $\mathbb{T}$ and $h(x)=2x$ is the Radon--Nikodym density for the limit measure associated with
$J$. 

But, there is hope here---a change of basis allows the use of Theorem \ref{main1}. We need only note that if we conjugate ${\bf B}_0$ and ${\bf B}_1$ by the matrix $$\begin{pmatrix}
1 &-1 \\
1 & 1
\end{pmatrix},$$ we arrive at a new linear representation of the sequence $J$, and a new basis for $\mathcal{V}_2(J)$, where the matrix ${\bf B}$ is replaced by $$\begin{pmatrix}
1 &-1 \\
1 & 1
\end{pmatrix} {\bf B}\begin{pmatrix}
1 &-1 \\
1 & 1
\end{pmatrix}^{-1}=\begin{pmatrix}
1 &-1 \\
1 & 1
\end{pmatrix} \begin{pmatrix}
4 &0 \\
0 & 2
\end{pmatrix}\begin{pmatrix}
1 &-1 \\
1 & 1
\end{pmatrix}^{-1}=\begin{pmatrix}
3 &1 \\
1 & 3
\end{pmatrix},$$ which is positive. Thus, we can apply Theorem \ref{main1} using this new basis resulting in the vector of measures $\boldsymbol{\mu}_J=\mu_J(1,1)^T.$ This method has forgotten the measure $\mu_{\bf 1}$. This process, in general (and if possible), picks out the maximal measure, in the sense of growth of elements of $\mathcal{V}_k(f)$. This maximality becomes evident noticing that one can choose a basis for $\mathcal{V}_k(f)$ from ${\rm ker}_k(f)$; see Allouche and Shallit \cite[Thm.~2.2(b)]{AS1992}. For more information on the Josephus sequence and for a study on measures associated with affine $2$-regular sequences, see Evans \cite{Epre}. 

\begin{remark} The distribution function of $\mu_J$ is Lipschitz, so that $\underline{{\rm dim}}_{\rm loc}(\mu_J,x)\geqslant 1$ for all $x\in\mathbb{T}$. In particular, the local dimension exists and is one. \exend
\end{remark}

Shifting now to Theorem \ref{BVexplicit}, consider the assumption in that $\rho({\bf B})$ is the unique dominant eigenvalue of ${\bf B}$ (which holds if ${\bf B}$ were say, primitive). There are plenty of examples of regular sequences where this is not the case. Dumas \cite[Ex.~5]{D2013} gave an interesting example of a $2$-regular sequence, which we denote by $D$, whose associated matrix ${\bf B}$ has a negative integer entry, is not primitive and does not have a maximal eigenvalue. Dumas' sequence $D$ is defined  by setting $${\bf B}_0=\left(\begin{matrix} 1 & 0\\ 0& 1\end{matrix}\right)\quad\mbox{and}\quad {\bf B}_1=\left(\begin{matrix} 3 & -3\\ 3 &3\end{matrix}\right),$$ and \begin{equation}\label{D}D(m)=e_1^T\, {\bf B}_{(m)_2} e_1=e_1^T\, {\bf B}_{1}^{s_2(m)} e_1,\end{equation} where $s_2(m)$ is the sum of the bits of $m$. As in the previous section, we consider the sequence of pure point measures  \begin{equation}\label{muDn}\mu_{D,n}:=\frac{1}{\varSigma_D(n)}\sum_{m=2^{n}}^{2^{n+1}-1}D(m)\, \delta_{m/2^{n+1}},\end{equation} where \begin{equation}\label{SigmaDn}\varSigma_D(n):=\sum_{m=2^{n}}^{2^{n+1}-1}D(m)=\frac{3}{2}\cdot\big((1+\ii)(4+3\ii)^n+(1-\ii)(4-3\ii)^n\big),\end{equation} since in this case, the matrix ${\bf B}={\bf B}_0+{\bf B}_1$ has eigenvalues $4+3\ii$ and $4-3\ii$, each of modulus $5$. To show that the `limit' $\mu_D$ does not exist, it is enough to prove that the sequence $\big(\mu_{D,n}([0,1/2))\big)_{n\geqslant 0}$ does not have a limit. To see why this is enough, note that if the measure $\mu_{D}$ did exist, then so would its distribution function $\mu_D([0,x)):[0,1]\to\mathbb{R}$, and necessarily we would have that $\lim_{n\to\infty}\mu_{D,n}([0,1/2))=\mu_D([0,1/2))$.

\begin{proposition}\label{diverge} The sequence $\big(\mu_{D,n}([0,1/2))\big)_{n\geqslant 0}$ does not converge. Moreover, it is not eventually periodic.
\end{proposition}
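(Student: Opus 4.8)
The plan is to reduce $\mu_{D,n}([0,1/2))$ to a ratio of consecutive values of $\varSigma_D$, and then exploit that the eigenvalues $4\pm3\ii$ have an argument that is an irrational multiple of $\pi$. First I would identify the mass that $\mu_{D,n}$ places on the lower half of its fundamental region. The atoms lying in $[0,1/2)$ correspond to the integers $m=2^n+m'$ with $0\leqslant m'<2^{n-1}$. Since $D(m)=e_1^T{\bf B}_1^{s_2(m)}e_1$ depends only on the binary digit sum $s_2(m)$, and since ${\bf B}_0={\bf I}$, the integers $2^n+m'$ and $2^{n-1}+m'$ have the same digit sum, whence $D(2^n+m')=D(2^{n-1}+m')$ for $0\leqslant m'<2^{n-1}$. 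Summing over $m'$ shows that the unnormalised mass on the lower half is exactly $\varSigma_D(n-1)$, so that
\begin{equation*}
\mu_{D,n}([0,1/2))=\frac{\varSigma_D(n-1)}{\varSigma_D(n)}.
\end{equation*}

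Next I would insert the closed form \eqref{SigmaDn}. Since the two summands there are complex conjugates, one may write
\begin{equation*}
\varSigma_D(n)=3\,\mathrm{Re}\!\big((1+\ii)(4+3\ii)^n\big)=3\sqrt{2}\,5^{n}\cos\!\big(n\theta+\tfrac{\pi}{4}\big),
\end{equation*}
where $\theta:=\arg(4+3\ii)=\arctan(3/4)$. The ratio then becomes
\begin{equation*}
\mu_{D,n}([0,1/2))=\frac{1}{5}\cdot\frac{\cos\!\big((n-1)\theta+\tfrac{\pi}{4}\big)}{\cos\!\big(n\theta+\tfrac{\pi}{4}\big)}.
\end{equation*}
In particular $\varSigma_D(n)$ never vanishes, for $\cos(n\theta+\pi/4)=0$ would force $\theta$ to be a rational multiple of $\pi$, which I rule out in the next step.

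The hard part will be showing $\theta/\pi\notin\mathbb{Q}$, equivalently that $\zeta:=(4+3\ii)/5=\ee^{\ii\theta}$ is not a root of unity. Here I would argue via unique factorisation in $\mathbb{Z}[\ii]$: since $4+3\ii=(2+\ii)^2$ and $5=(2+\ii)(2-\ii)$, one has $\zeta=(2+\ii)/(2-\ii)$, and $\zeta^{N}=1$ would give $(2+\ii)^{N}=(2-\ii)^{N}$, which is impossible because $2+\ii$ and $2-\ii$ are non-associate Gaussian primes. Hence $\theta$ is an irrational multiple of $\pi$, so the orbit $\{n\theta \bmod 2\pi:n\geqslant 0\}$ is dense in the circle by the classical equidistribution of an irrational rotation.

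Finally I would use this density to extract a subsequence $(n_j)$ with $n_j\theta\to\pi/4\pmod{2\pi}$. Along it the denominator $\cos(n_j\theta+\pi/4)\to0$, while the numerator tends to $\cos(\pi/2-\theta)=\sin\theta=3/5\neq0$; consequently $|\mu_{D,n_j}([0,1/2))|\to\infty$. Thus the sequence is unbounded, and an unbounded sequence can neither converge nor be eventually periodic, the latter because an eventually periodic sequence takes only finitely many values and is therefore bounded. This establishes both assertions simultaneously. The only genuinely non-routine ingredient is the irrationality of $\theta/\pi$; the reduction in the first step and the density argument at the end are standard matrix bookkeeping and the usual behaviour of an irrational rotation.
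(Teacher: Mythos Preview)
Your argument follows the same route as the paper's own proof: reduce $\mu_{D,n}([0,1/2))$ to the ratio $\varSigma_D(n-1)/\varSigma_D(n)$, rewrite this using the explicit eigenvalues $4\pm3\ii$, and then exploit the density of an irrational rotation to force the denominator arbitrarily close to zero while the numerator stays bounded away from zero, yielding unboundedness. Your trigonometric form $\frac{1}{5}\cos((n-1)\theta+\pi/4)/\cos(n\theta+\pi/4)$ is slightly cleaner than the paper's complex-exponential version, and you go further than the paper by actually \emph{proving} that $\theta/\pi$ is irrational rather than merely asserting it.

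One slip to correct: $(2+\ii)^2=3+4\ii$, not $4+3\ii$. The right factorisation is $4+3\ii=\ii\,(2-\ii)^2$, so that
\[
\zeta=\frac{4+3\ii}{5}=\frac{\ii\,(2-\ii)^2}{(2+\ii)(2-\ii)}=\ii\,\frac{2-\ii}{2+\ii}.
\]
Your non-associate-primes argument then goes through unchanged: $\zeta^{N}=1$ would force $\ii^{N}(2-\ii)^{N}=(2+\ii)^{N}$ in $\mathbb{Z}[\ii]$, impossible since $2+\ii$ and $2-\ii$ are non-associate Gaussian primes. (Amusingly, your erroneous $\zeta=(2+\ii)/(2-\ii)=(3+4\ii)/5$ has argument $\arctan(4/3)=\tfrac{\pi}{2}-\theta$, so proving \emph{that} is not a root of unity also yields $\theta/\pi\notin\mathbb{Q}$; but you should state the correct factorisation.) With this fix the proof is complete and matches the paper's approach.
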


\begin{proof} To prove this result, we show that there is a subsequence of this sequence which is unbounded. To this end, note that \eqref{muDn} gives $$\mu_{D,n}([0,1/2))=\frac{1}{\varSigma_D(n)}\sum_{m=2^{n}}^{2^{n}+2^{n-1}-1}D(m).$$ Using \eqref{D} we have that \begin{multline*}\sum_{m=2^{n}}^{2^{n}+2^{n-1}-1}D(m)=e_1^T\Bigg(\sum_{\substack{w\in\{0,1\}^*\\ |w|=n-1}}{\bf B}_w\Bigg){\bf B}_0{\bf B}_1 e_1\\ =e_1^T\left({\bf B}_0+{\bf B}_1\right)^{n-1}{\bf B}_1 e_1=\sum_{m=2^{n-1}}^{2^{n}-1}D(m)=\varSigma_D(n-1),\end{multline*} where we have used that ${\bf B}_0$ is the identity matrix to obtain the middle equality. Thus $$\mu_{D,n}([0,1/2))=\frac{\varSigma_D(n-1)}{\varSigma_D(n)}.$$ By a simple calculation applying \eqref{SigmaDn} to both the numerator and denominator, with some rearrangement, we have $$\mu_{D,n}([0,1/2))=\frac{1}{4+3\ii}\cdot\frac{1-\ee^{-\ii(2 \vartheta\cdot(n-1)-\pi/2)}}{1-\ee^{-\ii(2 \vartheta\cdot(n)-\pi/2)}},$$ where $\vartheta\approx 0.6435$ is the solution of $\cos\vartheta=4/5$. Now since $\vartheta$ is irrational and not a rational multiple of $\pi$, we have that the sequence of fractional parts $$\big(\{2 \vartheta\cdot(n)-\pi/2\}\big)_{n\geqslant 0}$$ is equidistributed in $[0,1)$. In particular, let $M>0$ be a positive integer. Then there is an $\varepsilon>0$ satisfying $$0<\varepsilon<\frac{\left|1-\ee^{-2\ii \vartheta}\right|}{5M+1},$$ and there are infinitely many $n$ such that $$\Big|1-\ee^{-\ii(2 \vartheta\cdot(n)-\pi/2)}\Big|<\varepsilon.$$ For these infinitely many $n$, $$\big|\mu_{D,n}([0,1/2))\big|>\frac{1}{5}\cdot\frac{\left|1-\ee^{-2\ii \vartheta}\right|-\varepsilon}{\varepsilon}=\frac{1}{5}\cdot\frac{\left|1-\ee^{-2\ii \vartheta}\right|}{\varepsilon}-\frac{1}{5}>M.$$ Since $M>0$ can be chosen arbitrarily large, $\big(\mu_{D,n}([0,1/2))\big)_{n\geqslant 0}$ is unbounded, which is the desired result.
\end{proof}

To observe large values of $\big(\mu_{D,n}([0,1/2))\big)_{n\geqslant 0}$ one must be patient. Figure \ref{fig:Dnoper} shows values of $\mu_{D,n}([0,1/2))$ for $n$ from $0$ to $100000$.
Modifying the proof of Proposition \ref{diverge}, \emph{mutatis mutandis}, one can show that the sequences $\big(\mu_{D,n}([0,1/2^k))\big)_{n\geqslant 0}$ are unbounded for every $k\geqslant 1$.

\begin{figure}[ht]
\begin{center}
  \includegraphics[width=.32\linewidth]{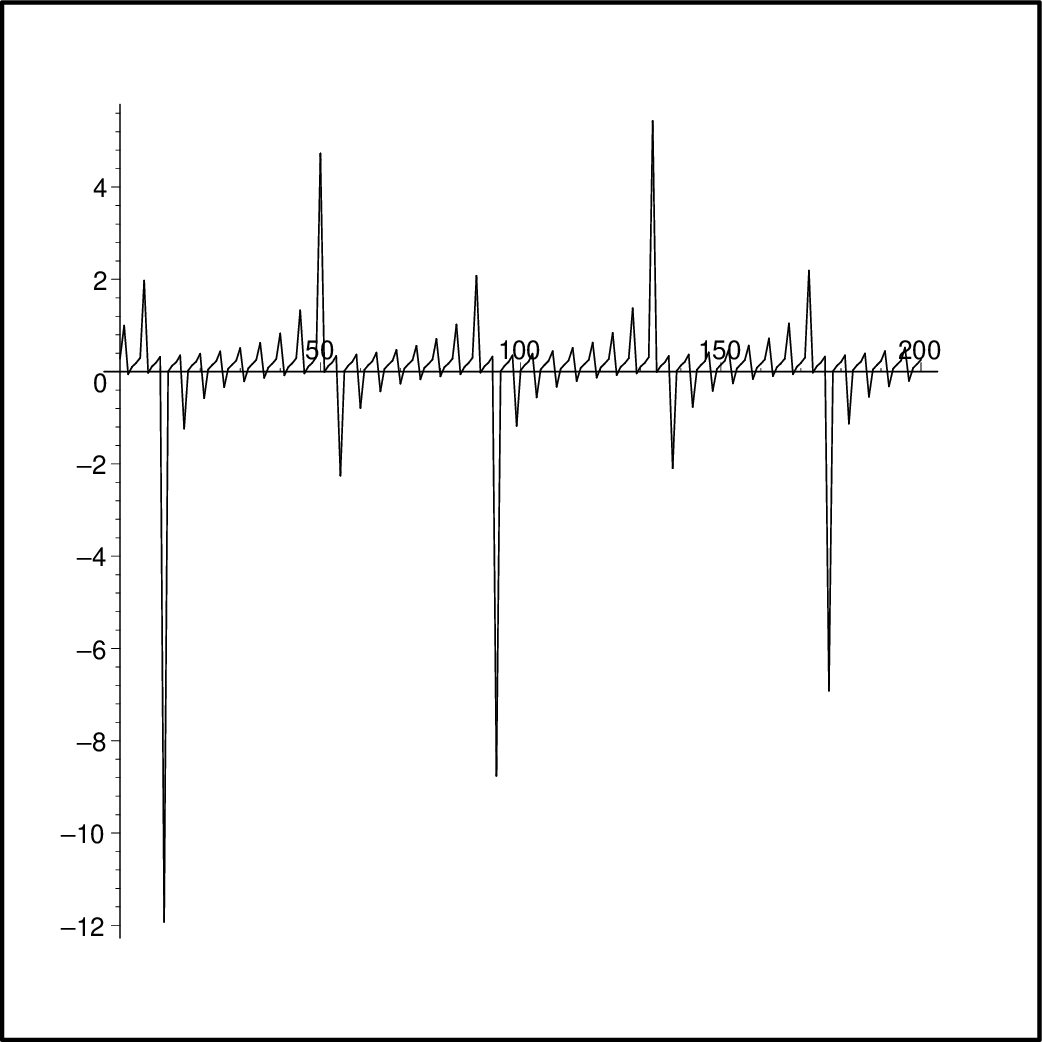}
  \includegraphics[width=.32\linewidth]{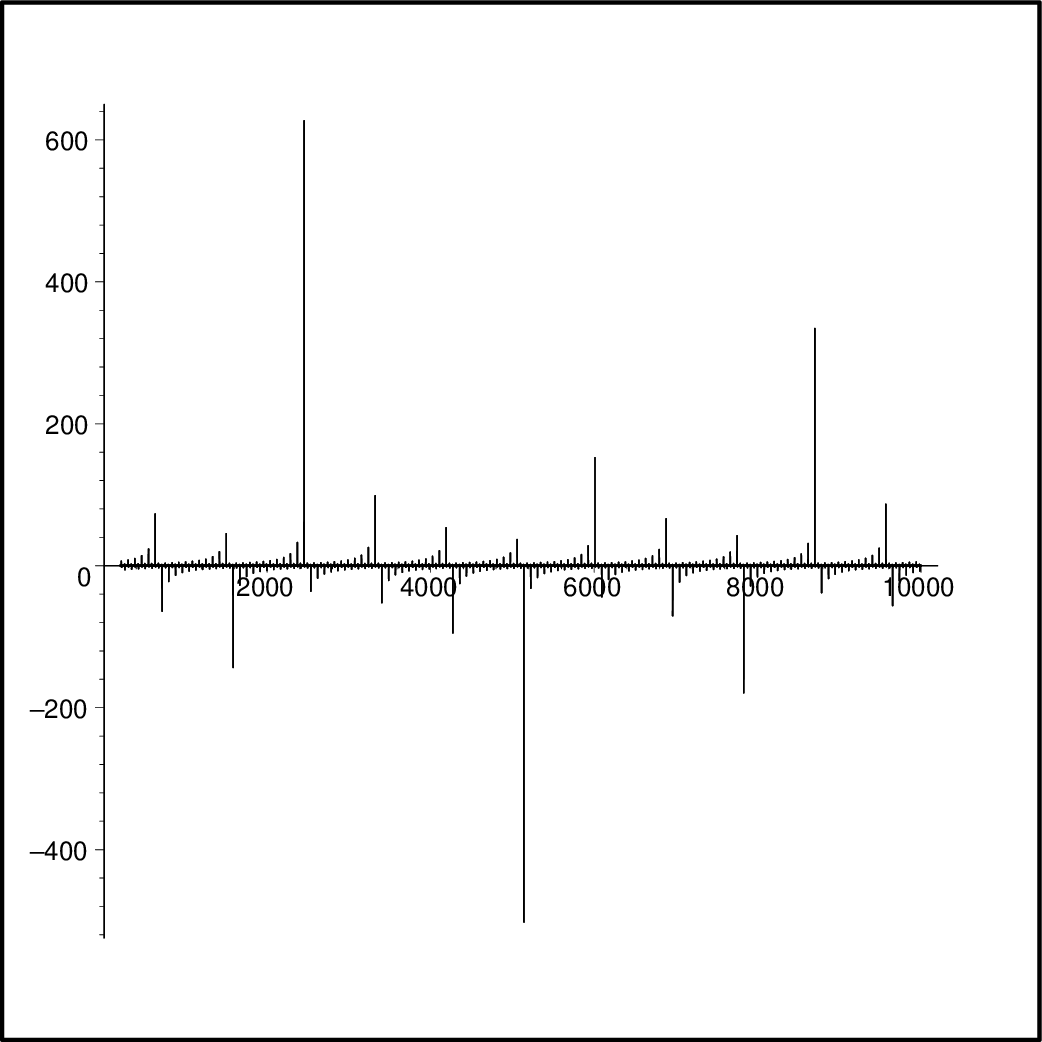}
  \includegraphics[width=.32\linewidth]{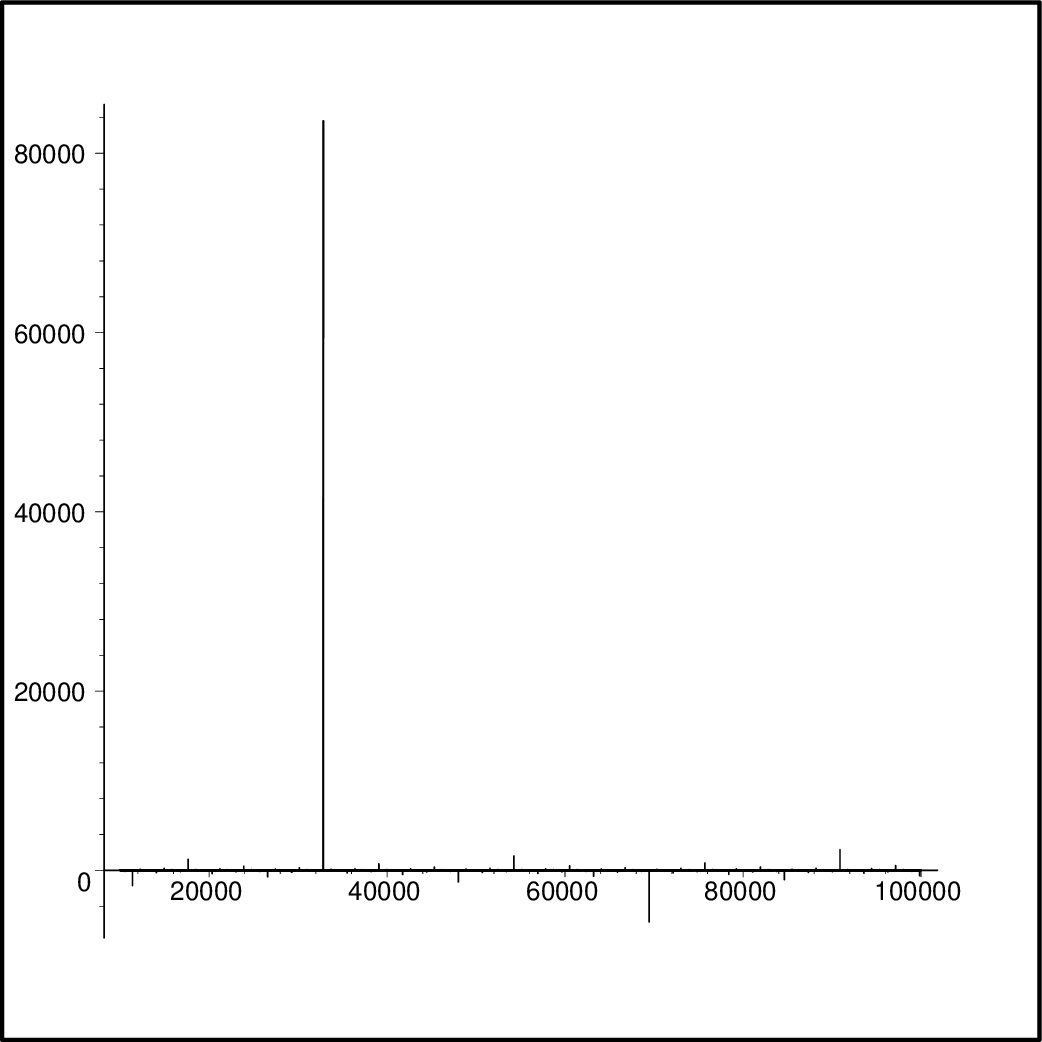}\end{center}
\caption{The values of $\mu_{D,n}([0,1/2))$ for (left) $n=0,\ldots,200$, (middle) $n=201,\ldots,10000$ and (right) $n=10001,\ldots,100000$.}
\label{fig:Dnoper}
\end{figure}

\begin{corollary} The sequence of the pure point measures $\mu_{D,n}$ does not converge weakly to a finite Borel measure.
\end{corollary}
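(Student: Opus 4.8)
The final statement is a corollary asserting that the sequence of pure point measures $\mu_{D,n}$ does not converge weakly to a finite Borel measure. The key result already established is Proposition \ref{diverge}, which shows that $\big(\mu_{D,n}([0,1/2))\big)_{n\geqslant 0}$ does not converge—in fact it is unbounded along a subsequence.

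The plan is to argue by contradiction using a standard consequence of the Portmanteau theorem. First I would suppose, for contradiction, that $\mu_{D,n}$ converges weakly to some finite Borel measure $\mu$ on $\mathbb{T}$. The natural route is to connect weak convergence to convergence of the distribution-function values $\mu_{D,n}([0,1/2))$. The cleanest way is this: by the Portmanteau theorem, for any Borel set $A$ whose boundary $\partial A$ has $\mu$-measure zero (a $\mu$-continuity set), one has $\mu_{D,n}(A)\to\mu(A)$. Since $\mu$ is a fixed finite measure, it has at most countably many atoms, so one can choose a point $x_0$ arbitrarily close to (and if desired equal to, after a small perturbation) $1/2$ such that $\{x_0\}$ and the boundary points of $[0,x_0)$ are not atoms of $\mu$; then $[0,x_0)$ is a $\mu$-continuity set and $\mu_{D,n}([0,x_0))\to\mu([0,x_0))$, a finite limit.

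The main obstacle—and the reason the remark immediately preceding the corollary matters—is that Proposition \ref{diverge} as stated handles the specific cut point $1/2$, whereas the continuity-set argument may force me to use a nearby point $x_0$. Two options resolve this. The cleaner one invokes the sentence already recorded after Proposition \ref{diverge}: the sequences $\big(\mu_{D,n}([0,1/2^\ell))\big)_{n\geqslant 0}$ are unbounded for every $\ell\geqslant 1$, so the set of dyadic cut points at which unboundedness holds is dense; combined with the fact that $\mu$ has only countably many atoms, I can select some dyadic $x_0=1/2^\ell$ that is simultaneously a $\mu$-continuity point and a point of unboundedness, giving the contradiction directly. Alternatively, one observes that if $\mu_{D,n}\to\mu$ weakly then $\mu_{D,n}\big([0,x]\big)\to\mu\big([0,x]\big)$ at every continuity point $x$ of the limit distribution function, so the limit distribution function is bounded on a dense set, contradicting the unboundedness of $\mu_{D,n}([0,1/2))$ along a subsequence.

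Concretely, the contradiction is assembled as follows. Weak convergence to a finite measure $\mu$ forces $\sup_n\mu_{D,n}(A)<\infty$ for every $\mu$-continuity set $A$, since $\mu_{D,n}(A)\to\mu(A)<\infty$ implies the sequence is bounded. But by Proposition \ref{diverge} (and its stated dyadic extension), there is a half-open interval $[0,x_0)$ that is a $\mu$-continuity set along which $\mu_{D,n}([0,x_0))$ is unbounded. This contradiction shows no such finite Borel measure $\mu$ can exist, and hence $\mu_{D,n}$ does not converge weakly to a finite Borel measure, as claimed.
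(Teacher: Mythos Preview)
Your approach matches the paper's: the corollary is stated there without proof, the justification being the sentence just before Proposition~\ref{diverge} (``if the measure $\mu_D$ did exist, then \ldots\ necessarily $\lim_{n\to\infty}\mu_{D,n}([0,1/2))=\mu_D([0,1/2))$''). You are in fact more careful than the paper in worrying about whether $1/2$ is a continuity point of the putative limit, and your use of the dyadic extension to sidestep that is a nice touch.

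One small technical point is worth flagging. The Portmanteau theorem in its standard form is stated for positive (usually probability) measures, while here the $\mu_{D,n}$ are genuinely signed (the sequence $D$ takes negative values), and the usual ``sandwich $\mathbf{1}_A$ between continuous functions'' proof of $\mu_n(A)\to\mu(A)$ on continuity sets uses monotonicity that fails for signed measures. A cleaner way to close the argument, which avoids this entirely, is: unboundedness of $\mu_{D,n}([0,1/2))$ immediately gives $\sup_n\|\mu_{D,n}\|_{\mathrm{TV}}=\infty$, and by the uniform boundedness principle (viewing the $\mu_{D,n}$ as elements of $C(\mathbb{T})^{*}$) this already rules out weak convergence to any finite Borel measure. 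This uses only the unboundedness established in Proposition~\ref{diverge} at the single point $1/2$, so you do not even need the dyadic extension.
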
 

We now consider the dependence of Theorem \ref{BVexplicit} on the non-vanishing condition in Eq.~\eqref{zjcond}. While the dependence on this condition adds to the technicality of Theorem~\ref{BVexplicit} and to the difficulty in applying the result in full generality, seen from the view of some of common situations, the condition is quite natural. When ${\bf B}$'s dominant eigenvalue is simple, the non-vanishing condition in Eq.~\eqref{zjcond} is required to avoid certain degenerate situations. For example, consider the positive $k$-regular sequence $f$, where $f(m)=e_1^T\, {\bf B}_{(m)_k}\, {\bf w}$, with
$${\bf B}_0={\bf B}_1=\cdots={\bf B}_{k-1}=\left(\begin{matrix} * & 0\\ 0 & c \end{matrix}\right), \quad\mbox{and}\quad{\bf w}=\left(\begin{matrix} *\\ 0 \end{matrix}\right),$$ where the `$*$' indicates any positive values. For large integers $c$ the dominant eigenvalue of ${\bf B}$ will be $k\cdot c$, but none of the matrices ${\bf B}_a$ nor ${\bf w}$ have any component that interacts with this part of the matrix ${\bf B}$, and so this growth is not reflected in the behaviour of the sequence. Of course, the linear representation of the sequence is not unique, so this problem is not intrinsic to the sequence. More probable is that one has made a sub-optimal choice of the matrices ${\bf B}_a$---as is readily apparent, one can fix this example by deleting the un`$*$' parts.

With these examples in mind, assuming that degenerate situations can be avoided, the sufficient assumptions of Theorem \ref{BVexplicit} seem close to necessary for the existence of such measures. Of course, on a case-by-case basis, the measure could exist without satisfying these conditions, but it is certainly not guaranteed in general.

%%%%%%%%%%%%%%%%%%%%%%%
\section{Concluding remarks}\label{sec:cr}
%%%%%%%%%%%%%%%%%%%%%%%

In this paper, we have constructed measures associated with $k$-regular sequences and the vector spaces generated by their $k$-kernels in an effort to view these structures as a new source of dynamical objects, whose recursive properties are reminiscent of fractals. 

In working with examples, curiosities  having to do with spectral type are evident, and natural examples with certain properties are elusive. 

A case in point is the seeming scarcity of regular sequences yielding pure point measures. Of course, Theorem~\ref{main2} concludes that if $f$ is primitive (and other conditions), then $\mu_f$ is continuous. So to find examples, it seems wise to study sequences $f$ which have many zeros. For a trivial example, if one takes $f=\chi_{_2}$ to be the characteristic sequence of the powers of two, then we easily compute that $\mu_{\chi_{_2}}=\delta_0$. Staying with binary sequences for the moment, as soon as the density of ones in the fundamental regions $[2^n,2^{n+1})$ is bounded below, we are spreading the mass equally to these points, so the mass allocated to any one point in $\mathbb{T}$ must go to zero, and so assuming some regularity on the distribution of the non-zero values of the sequence the measure will be continuous; see Coons and Evans \cite{CEpre} for a family of singular continuous examples related to iterated function systems. Some non-trivial examples of sequences with pure-point measures exist, see Evans \cite{Epre}. However, these still occur in a relatively trivial way, where certain terms of the sequence contain a non-zero fraction of its entire mass. It seems likely that one could produce pure point measures differently, by instead having many small point masses concentrate in a single location, but it is unclear if this can be done in a non-trivial way within the confines of regular sequences; see Evans \cite[Thm.~1.1(2D)]{Epre}. For a non-regular example, consider the values of the binomial coefficients, say $2^n\choose m$ between powers of two and follow our process. The measure obtained will be $\delta_{1/2}$. This happens because, even though the ratio of every value to the sum is going to zero, the sequence is dominated by the central binomial coefficient and the limiting Gaussian curve, when scaled to the interval $[0,1)$, is collapsing to its mean.

Our paradigm contrasts starkly with the situation for diffraction measures, where there is a plethora of examples with pure point spectral type, e.g. those arising from model sets (both in the regular and weak sense) and Toeplitz systems; see Keller \cite{Keller} and Keller and Richard \cite{Keller3}. 
For example, the diffraction measure of the paperfolding sequence is pure point \cite[p.~380]{BGbook}, but in our construction the produced measure is Lebesgue. 
This is similar to the contrast between the spectral theory of Schr\"odinger operators and diffraction measures where absence of eigenvalues is also prevalent in the substitution setting; see Damanik, Embree and Gorodetski \cite{DEG2015}. 

The question of spectral purity also arises. In most of the examples we have produced, the measure $\mu_f$ is of pure type. As suggested by Theorem~\ref{main2}, focussing solely on continuous measures, is there a natural example of a regular sequence with continuous measure of mixed spectral type, that is, having both absolutely continuous and singular continuous components in its Lebesgue decomposition? While examples can be constructed---using that the set of $k$-regular sequences is a group under point-wise addition \cite{AS1992}---we have yet to find a natural non-trivial example or a ``good'' sufficient criterion for purity.

%%%%%%%%%%%%%%%%%%%%%%%
\section*{Acknowledgements}
%%%%%%%%%%%%%%%%%%%%%%%

It is our pleasure to thank Michael Baake for helpful conversations and comments, and for suggesting important references. M.~Coons and J.~Evans acknowledge the support of the Commonwealth of Australia and N.~Ma\~nibo acknowledges the support of the German Research Foundation (DFG) via the CRC 1283. 

%%%%%%%%%%%%%%%%%%%%%%%%%%%%%%%%%%%%%%%%%%%%%%%%
\bibliographystyle{amsplain}
\providecommand{\bysame}{\leavevmode\hbox to3em{\hrulefill}\thinspace}
\providecommand{\MR}{\relax\ifhmode\unskip\space\fi MR }
% \MRhref is called by the amsart/book/proc definition of \MR.
\providecommand{\MRhref}[2]{%
  \href{http://www.ams.org/mathscinet-getitem?mr=#1}{#2}
}
\providecommand{\href}[2]{#2}

%%%%%%%%%%%%%%%%%%%%%%%%%%%%%%%%%%%%%%%%%%%%%%%%

\end{document}